\newtheorem{rmq}{Remark}
\newtheorem{lem}{Lemma}
\newtheorem{thm}{Theorem}
\newtheorem{hyp}{Assumption}
\newcommand{\x}{\partial_x}
\newcommand{\y}{\partial_y}
\newcommand{\po}{\left(}
\newcommand{\pf}{\right)}
\title{Hypocoercive relaxation to equilibrium for some kinetic models\\
\emph{via} a third order differential inequality}
\author{Pierre Monmarché}
\affil{Institut de Mathématiques de Toulouse}
\begin{document}

\maketitle
\abstract{This paper deals with the study of some particular kinetic models, where the randomness acts only on the velocity variable level. Usually, the Markovian generator cannot satisfy any Poincaré's inequality. Hence, no Gronwall's lemma can easily lead to the exponential decay of $F_t$ (the $L^2$ norm of a test function along the semi-group). Nevertheless for the kinetic Fokker-Planck dynamics and for a piecewise deterministic evolution we show that $F_t$ satisfies a third order differential inequality which gives an explicit rate of convergence to equilibrium.}

\tableofcontents

\section{Introduction}

In order to improve MCMC algorithms one can try to resort to higher order dynamics, for instance kinetic ones.
Indeed, non-reversible dynamics naturally possess more inertia than reversible ones and have less tendency to turn back and hesitate than the simple reversible process. This is an important issue for the escape of local minima and such non-reversible processes may then converge faster to equilibrium (cf. \cite{Diaconis2000}, \cite{DiaconisMiclo}, \cite{Neal2004}, \cite{Lelievre2006}). 

For instance, \cite{Lelievre2006} compare numerically the following sampling procedures of the Gibbs measure $e^{-U(x)}dx$ associated to a potential $U$.
 First, thanks to the Fokker-Planck dynamics
\[dX_t = -U'(X_t) + \sigma dB_t\]
and secondly with the kinetic Fokker-Planck one (shorten from now on to kFP ; it is called Langevin dynamics in \cite{Lelievre2006}, but we stick here to \cite{DesVilla2003} for the denomination)

\begin{equation}\label{equationkFP}
\begin{cases}
& dX_t  =  Y_tdt\\
& dY_t  =  -U'(X_t)dt - Y_tdt + \sqrt2 dB_t
\end{cases}
\end{equation}
where $B_t$ stands for a standard brownian motion. It turns out, numerically, that the second one is generally more efficient, in the sense that it converges faster toward the steady regime.

\bigskip

About a decade ago, there were no method to obtain explicit rates of convergence for non-reversible Markov process, as usualy the classical functional inequality theory (cf. \cite{logSob}, \cite{Markowich99}), powerfull in reversible settings, does not apply (it can in some particular cases, see \cite{Arnold1}, \cite{Arnold2}). But since then, as the topic is of interest in many fields, many different approaches have emerged. Here is a far from exhaustive list of references roughly sorted in three group : first the analytical method based on the spectral study of hypoelliptical operators, initiated by Hérau and Nier (in \cite{Nier2004}, followed by \cite{Herau2005}, \cite{Hairer2003}, \cite{Helffer2005}, \cite{Zegarlinski}), where the decay is obtained in some Sobolev norm. Secondly the probabilistic method of coupling \emph{à la} Meyn and Tweedie, in Wasserstein distances (see \cite{GadatPanloup2013},\cite{Guillin2006},\cite{Guillin2011}, \cite{Bolley2010}, and \cite{Cattiaux2008} for a link with functional 
inequalities), recently succesfully applied in particular in the 
field of PDMPs (piecewise 
deterministic Markovian processes ; see \cite{Malrieu2011}, \cite{Fontbona2010}, \cite{ChafaiMalrieuParoux}, \cite{BLBMZ2}). Finally the method of the modified Lyapunov function initiated by Desvillettes and Villani (\cite{Desvillettes2003}, \cite{DesVilla2003}, \cite{Desvillettes2006}, \cite{Villani2009},\cite{Villani2}, \cite{MouhotNeumann}, \cite{Fellner2003}, \cite{Calogero2010}), and then refined by Dolbeault, Mouhot and Schmeiser (\cite{DMS2009}, \cite{DMS2011}, \cite{Grothaus1}, \cite{Grothaus2}) who work for the latter with a norm equivalent to the $L^2$ one, without any addition of supplementary derivatives. The present work is rather close to this last approcah.

\bigskip

Despite (or thanks to) all this work, some phenomena arising from the interplay between the deterministic transport and the stochastic part of the generator still deserve to be better understood. In particular the convergence to equilibrium appears to be inhomogeneous in time: in \cite{Gadat2013}, where the $L^2$ distance $d(t)$ between the distribution at time $t$ and the equilibrium is explicitly computed for the kFP process with a quadratic potential, the decay is flat for small times, \emph{i.e.} $d(t) \simeq 1 - c t^3$. Indeed, if $d'(0)$  were non zero, it would imply a Poincaré inequality (see \cite{logSob}) but none is satisfied there. Furthermore in some cases we have $d(t)=g_t e^{-\lambda t}$ for some $\lambda >0$ but with a periodic prefactor $g_t$. Such oscillations, linked to the competition for the convergence to equilibrium between the position and the velocity (see the discussion p.66 of \cite{Desvillettes2003}), have also been numerically observed for the 
Boltzmann equation in \cite{Filbet}. This behaviour is reminescent of functions of the form $\phi(t)= e^{-\lambda t}\po a + b\cos(\nu t + \theta )\pf$, which are solutions of
\[\po(\partial_t + \lambda)^3 + \nu^2 (\partial_t + \lambda)\pf \phi = 0.\]
The third order may also be linked to the number of Lie Brackets one has to take in Hörmander's hypoellipticity theory to obtain a full rank (cf. \cite{Hormander1967}), and is expected to get bigger for higher order models (for instance oscillator chains \cite{Hairer2000}). Yet most of the current results rely on the existence of some quantity that somehow decreases \emph{at all time}, in other words in a first order differential equation (with the notable exception of \cite{Tran2012} where the usual dissipation of entropy is checked in mean in time). We can expect, in fact, a third order differential inequality to be satisfied, which can account for these inhomogeneities. This is the scope of the present article. This is not a new idea (cf. \cite{DesVilla2003}, \cite{Ledoux1995}) but up to our knowledge it had never been succesfully completed. In fact for the kFP model it has been noted in~\cite{Gadat2013} that no linear combination of the $L^2$ norm and its three first derivatives can be non-positive for 
all test 
functions, so we will clarify in the sequel the meaning of third order differential inequality.  

\subsection*{The models}

More precisely, this work will be devoted to study the relaxation to equilibrium for two kinetic models. The dynamics of the first one, the kFP process, is given by equation \eqref{equationkFP}. $(X_t,Y_t)\in\mathbb R^2$ is then the position-speed process of a particle in a potential $U$ with friction and noise. Results about its convergence to equilibrium can be found in \cite{Gadat2013} for a quadratic potential, and, according to one favorite method, \cite{Herau2005}, \cite{Guillin2006} or \cite{DesVilla2003} (among others) for more general cases (the coupling method, in \cite{Guillin2006}, only deals with convex potentials). 

The second one is a generalised version of the telegraph process, for which $(X_t,Y_t)\in\mathbb R\times \{\pm1\}$, where
$ dX_t  = Y_t dt$ and $Y_t$ jumps to its opposite following an inhomogeneous rate $a(X_t,Y_t)$. Here the particle go forward at constant speed and only does U-turn (cf. Figure \ref{figureA0} and \ref{figureA1} for an illustration). In the classical telegraph process the rate of jump $a$ is constant over its definition space. If we take $X_t \in \mathbb R/2\pi\mathbb Z$ to ensure ergodicity, we obtain maybe one of the simplest toy models for kinetic processes, cited as a basic example in \cite{Fellner2003} or \cite{DMS2011} and precisely studied in \cite{Volte-Face}. When the rate is no longer constant, the underlying algebra collapses. An ergodic version on the real line has recently been investigated in \cite{Fontbona2010} but, again with coupling method, the invariant measure corresponds to a convex potential.

\bigskip

In our cases, $(X_t,Y_t)$ has a unique invariant measure denoted $\mu$. Recall that the semi-group $(P_t)_{t\geq 0}$ of operators on $L^2( \mu)$ is defined by
\[P_tf(x,y) := \mathbb E\po f(X_t,Y_t)|X_0 = x,Y_0=y\pf.\]
Its infinitesimal generator $L$ is
\[L f := \overset{L^2(\mu)}{\underset{t\rightarrow 0}\lim} \frac{P_t f-f}{t} \]
for $f$ such that the limit exists. To focus on other questions, from now on we assume the existence of a core $\mathcal D$ dense in $L^2(\mu)$, stable by $L$, and we will always consider $f\in\mathcal D$. For a more analytical setting of the problem, denoting by $\hat L$ the dual of $L$, which operates on measures, the law $\mu_t$ of $(X_t,Y_t)$ is the (weak) solution of
\begin{displaymath}
 \left\{\begin{array}{l}
  \partial_t \mu_t = \hat L \mu_t\\
  \\
\mu_0 = law(X_0,Y_0).
 \end{array}\right.
\end{displaymath}

Then
\[P_tf(x,y) = \int f(u,v)\mu_t(du,dv)\]
when $\mu_0 = \delta_{(x,y)}$. We aim to quantify the convergence of $\mu_t$ to $\mu$.

\bigskip

For the kFP model, $\mu = e^{-U(x)}dx\otimes e^{-\frac{y^2}{2}}dy$ is the Gibbs measure associated to the Hamiltonian $U(x)+\frac{y^2}{2}$, and
\begin{eqnarray}\label{equationLdekFP}
 Lf & =& y\x f - U'(x)\y f  - y\y f+ \y^2f.
\end{eqnarray}
For the telegraph one, $\mu = e^{-U(x)}dx\otimes \frac{\delta_1+\delta_{-1}}2(dy)$ where $U'(x) = a(x,1)-a(x,-1)$ (see Lemma~\ref{lemMesureInvarCT}). Denoting $f_-(x,y) = f(x,-y)$,
\begin{eqnarray}\label{equationLdeCT}
 L f& =& y\x f + a(x,y) (f_--f).
\end{eqnarray}

\begin{figure}
\centering
\includegraphics[scale=0.3]{./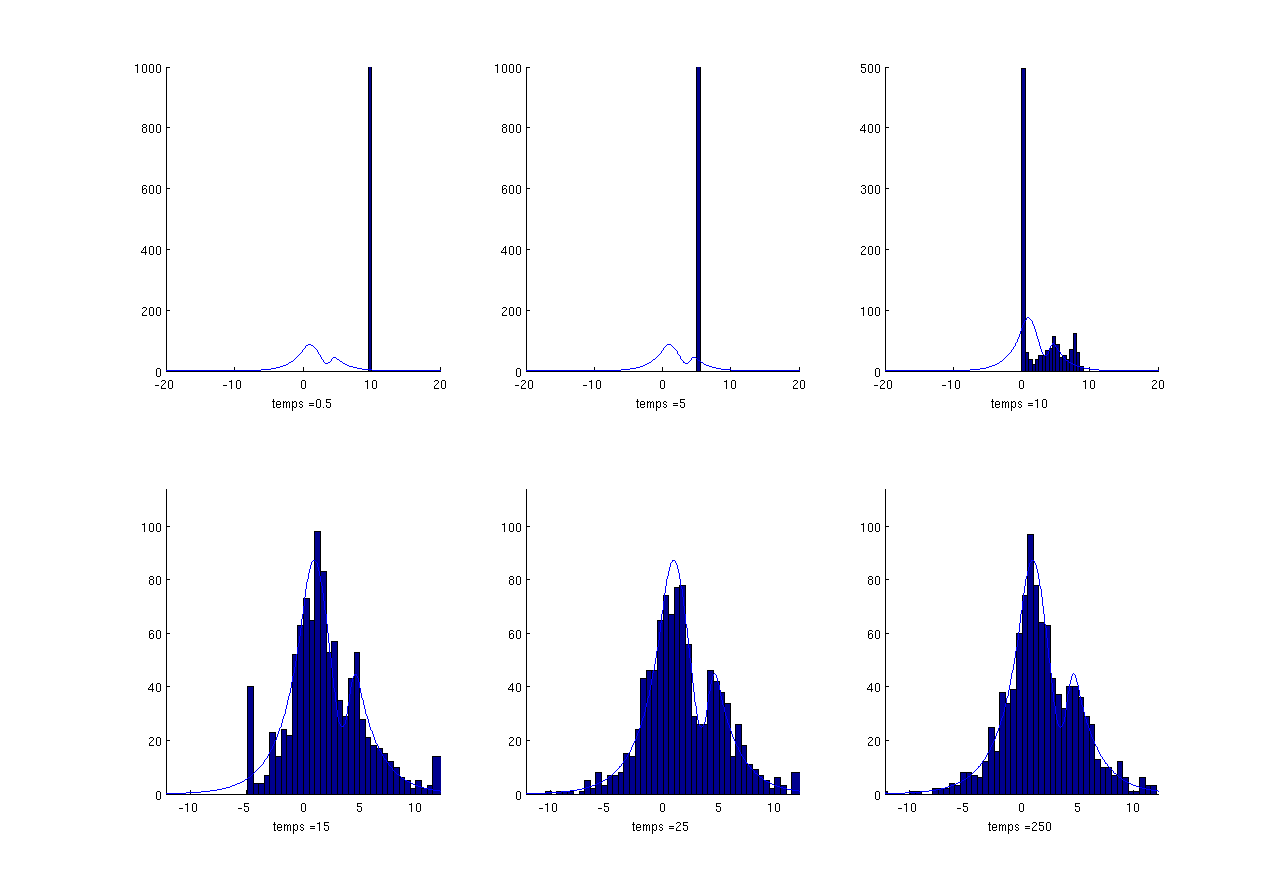}
 \caption{First marginal of the telegraph process at different time with a bi-modal invariant law $e^{-U(x)}dx$, $(X_0,Y_0) = (7,-1)$ and $a(x,y) = \po yU'(x)\pf_+$. While the potential decreases along the trajectory, the process is deterministic. It easily escapes  from the local minimum.}\label{figureA0}
\end{figure}

\begin{figure}
\centering
\includegraphics[scale=0.3]{./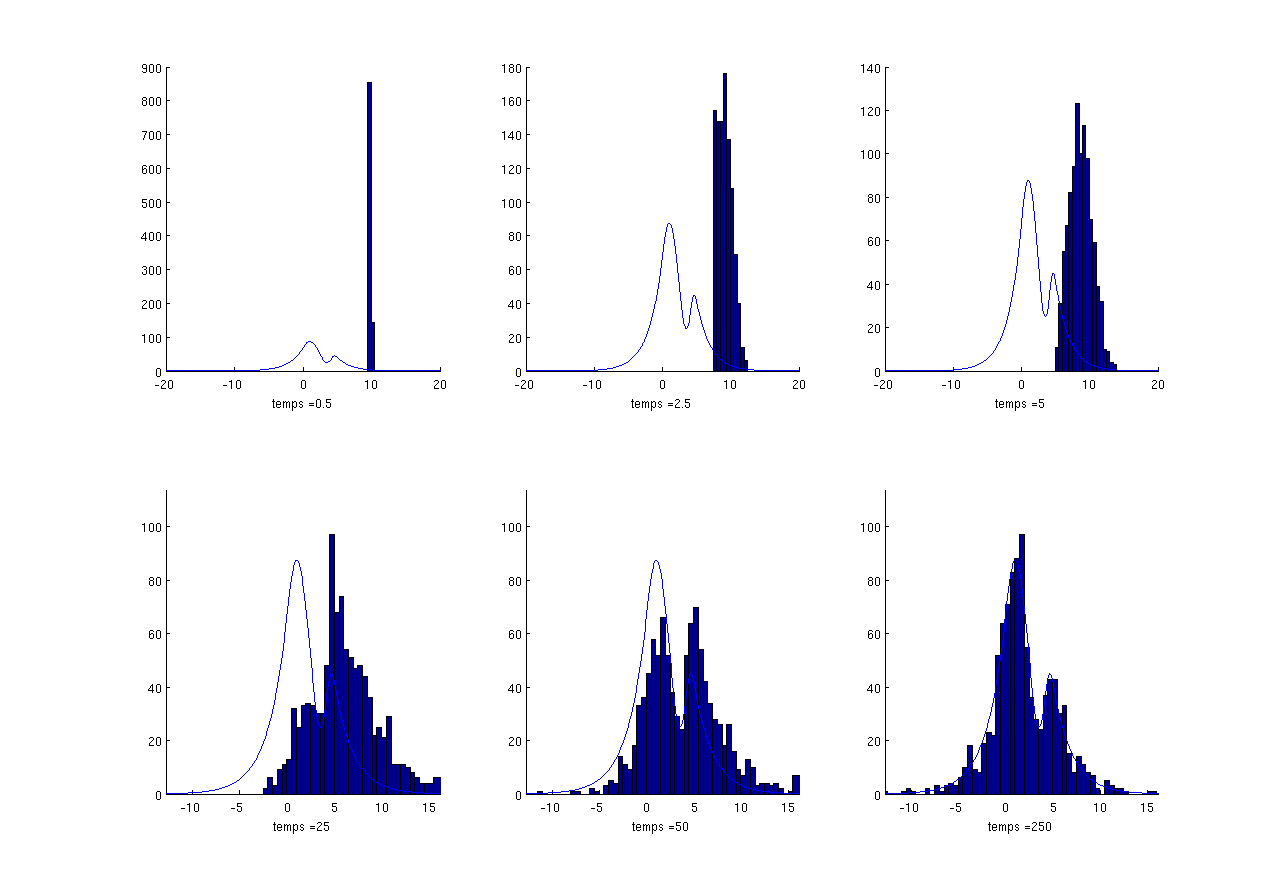}
 \caption{Here $a(x,y) = 1+\po yU'(x)\pf_+$. In other words, contrary to Figure \ref{figureA0}, there is always a minimal level of randomness : the behaviour is more diffusive and it takes longer to leave the local minimum.}\label{figureA1}
\end{figure}

These two processes share some common features. One of them is that there is no coercivity from the deterministic part of the dynamics when the potential is not convex; in other words two particles coupled with the same random part don't have any trend to get closer. In the other hand the randomness only occurs in the velocity variable, and thus the processes are fully degenerate in the sense of \cite{Cattiaux2008} and their Bakry-Emery curvature (definition 5.3.4 in \cite{logSob}) is equal to $-\infty$.

The study is restricted to dimension 1 in order to keep a reasonable level of computations and let the main ideas clear. The author did succesfully apply the method presented below to the telegraph in higher dimension, but surely we could improve our understanding of it and write it in more abstract settings, better suited for generalization.

\subsection*{Main result}

Let $f_t = (P_t - \mu) f$ where $\mu f = \int f d\mu$, so that $\forall t\geq0$, $\mu f_t = 0$; in other words $f_t \in 1^\perp$ the orthogonal space of the constants in $L^2(\mu)$. Let $F_t =\|f_t\|^2_{L^2(\mu)}$. In the following (cf. Section \ref{SectionInegalite}) we show that, under some assumptions on the potential $U$ or on the rate $a$, there exist explicit $\lambda,\eta>0$ and $\nu_*\in\mathbb R$ and a function $t\mapsto \nu_t\geq \nu_*$ such that
\begin{eqnarray}\label{equationORdre3}
 (\partial_t + \lambda)\Big[ (\partial_t + \eta)^2 + \nu_t\Big]F_t & \leq & 0.
\end{eqnarray}
Furthermore $\nu_*$ is such that the roots of the polynomial $(X+\eta)^2+\nu_*$ have negative real part, namely either $\nu_* \geq 0$ or $\sqrt{|\nu_*|} < \eta$ ; to sum up, $\mathcal Re(\eta - \sqrt{-\nu_*})>0$.
Then exponential decay follows from the next result.
\begin{thm}\label{propconvergence}
Assume \eqref{equationORdre3} holds.
\begin{itemize}
 \item if $\nu_* \leq 0$ then $F_t \leq \phi_t$
\item if $\nu_* > 0$ then $F_t \leq \phi_t+ e^{-\eta t} \underset{s\leq t}\sup\ e^{\eta s}\po\phi_s-F_s\pf$ ; furthermore the length of a time interval where $F > \phi$ is less than $\frac{\pi}{\nu_*}$
\end{itemize}
with in both cases $\phi$ solution of
\[(\partial_t + \lambda)\Big[ (\partial_t + \eta)^2 + \nu_*\Big]\phi_t = 0,\]
$\phi_0 = F_0$, $\phi'_0 = F'_0$ and an explicit $\phi''_0$.
In particular,
\[\underset{t\rightarrow \infty}{\lim} \frac1t\ln F_t \leq -\min\po\lambda,\mathcal Re(\eta - \sqrt{-\nu_*})\pf\]
\end{thm}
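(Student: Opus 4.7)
\emph{Reducing the order.} The inequality~\eqref{equationORdre3} rewrites as $\partial_t(e^{\lambda t} Z_t) \leq 0$ with $Z_t := [(\partial_t + \eta)^2 + \nu_t] F_t$, so $Z_t \leq Z_0 e^{-\lambda t}$. Since $F_t = \|f_t\|^2_{L^2(\mu)} \geq 0$ and $\nu_t \geq \nu_*$, this yields the second-order inequality
\[[(\partial_t + \eta)^2 + \nu_*] F_t \;\leq\; Z_0 e^{-\lambda t}.\]
The free initial condition is picked as $\phi''_0 := F''_0 + (\nu_0 - \nu_*) F_0$ so that $[(\partial_t + \eta)^2 + \nu_*]\phi_t|_{t=0} = Z_0$; combined with the homogeneous ODE satisfied by $\phi$, this forces the identity $[(\partial_t + \eta)^2 + \nu_*]\phi_t = Z_0 e^{-\lambda t}$ for every $t$. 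Hence $h_t := \phi_t - F_t$ satisfies
\[[(\partial_t + \eta)^2 + \nu_*] h_t \;\geq\; 0, \qquad h_0 = h'_0 = 0.\]

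\emph{Case $\nu_* \leq 0$.} The standing assumption $\sqrt{-\nu_*} < \eta$ makes both integrating factors in the factorisation $(\partial_t + \eta - \sqrt{-\nu_*})(\partial_t + \eta + \sqrt{-\nu_*})$ exponentially growing. Setting $u_t := (\partial_t + \eta + \sqrt{-\nu_*}) h_t$ one has $u_0 = 0$ and $(\partial_t + \eta - \sqrt{-\nu_*}) u_t \geq 0$, so a Gronwall argument gives $u_t \geq 0$, and a second one gives $h_t \geq 0$, i.e.\ $F_t \leq \phi_t$.

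\emph{Case $\nu_* > 0$.} I would introduce $H_t := e^{\eta t} h_t$, which satisfies $H_0 = H'_0 = 0$ and $H''_t + \nu_* H_t \geq 0$ (a direct computation turns the factor $(\partial_t + \eta)^2$ into $e^{-\eta t}\partial_t^2 e^{\eta t}$). On a maximal interval $(a,b)$ where $H < 0$, compare with $u(s) := \sin(\sqrt{\nu_*}(s-a))$ through the Wronskian $W := H'u - Hu'$: one has $W' = (H'' + \nu_* H)u \geq 0$ on $[a, a+\pi/\sqrt{\nu_*}]$ and $W(a)=0$, so $W(a+\pi/\sqrt{\nu_*}) \geq 0$ forces $H(a+\pi/\sqrt{\nu_*}) \geq 0$, whence $b - a \leq \pi/\sqrt{\nu_*}$. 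The pointwise estimate $-H_t \leq \sup_{s\leq t} H_s$ (which is the claimed inequality $F_t \leq \phi_t + e^{-\eta t} \sup_{s \leq t} e^{\eta s}(\phi_s - F_s)$ rewritten) is more delicate: I would rely on the Duhamel representation
\[H_t \;=\; \frac{1}{\sqrt{\nu_*}}\int_0^t \sin\bigl(\sqrt{\nu_*}(t-s)\bigr) g(s)\,ds, \qquad g := H'' + \nu_* H \geq 0,\]
and follow the phase-space trajectory $(H_t, H'_t/\sqrt{\nu_*})$, which is a rotation at angular speed $\sqrt{\nu_*}$ perturbed by a nonnegative radial forcing; an envelope argument (or an induction comparing successive extrema) should show that each negative excursion of $H$ is dominated in magnitude by an earlier positive peak.

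\emph{Exponential rate.} The characteristic polynomial $(X+\lambda)\bigl[(X+\eta)^2+\nu_*\bigr]$ has roots $-\lambda$ and $-\eta \pm \sqrt{-\nu_*}$ with real parts bounded above by $-\mu$ where $\mu := \min\bigl(\lambda,\mathrm{Re}(\eta - \sqrt{-\nu_*})\bigr) > 0$; hence $\phi_t = O(e^{-\mu t})$, and in the oscillatory regime the correction $e^{-\eta t}\sup_{s\leq t} e^{\eta s}(\phi_s - F_s)$ is of the same order since $\eta \geq \mu$. The expected main obstacle is the pointwise comparison in case~$\nu_* > 0$: having $g\geq 0$ and $H_0 = H'_0 = 0$ does not imply $H \geq 0$, and making the amplitude estimate rigorous is the technical heart of the statement.
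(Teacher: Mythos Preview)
Your reduction of order and the treatment of the case $\nu_*\leq 0$ coincide with the paper's proof (Gronwall to pass from third to second order, then factorise $(\partial_t+\eta)^2+\nu_*$ and apply Gronwall twice). For the bound on the length of an interval where $F>\phi$ when $\nu_*>0$, your Wronskian computation is the explicit form of the Sturm--Liouville comparison the paper invokes, so that part is also essentially the same.

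The genuine gap is the amplitude estimate $-H_t\leq\sup_{s\leq t}H_s$, which you yourself flag as unfinished. Your Duhamel/phase-space heuristic is suggestive but you do not carry it through, and ``nonnegative forcing along a rotation'' does not by itself yield the one-sided bound without further work. The paper closes this cleanly with an energy argument, which in your variables ($H''+\nu_*H\geq 0$, $H_0=H'_0=0$) reads as follows. Set $M_t:=\sup_{s\leq t}H_s$ and suppose the inequality $-H\leq M$ fails for the first time at $s$. Then $-H_s=M_s>0$, so $H_s<0$ and $H''_s\geq -\nu_* H_s>0$; since $-H$ crosses $M_s$ upward at $s$ one must have $H'_s<0$, hence the energy $E_t:=(H'_t)^2+\nu_*H_t^2$ satisfies $E_s>\nu_*M_s^2$. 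Let $u:=\sup\{t\leq s:\ H'_t\geq 0\}$ (this exists since $H'_0=0$). On $(u,s]$ one has $H'<0$, so $H$ decreases and $M$ is constant: $M_u=M_s$. At $u$ one has $H'_u=0$ and $|H_u|\leq M_u$ (by definition of $M$ for the upper bound, and because $u<s$ for the lower bound), so $E_u=\nu_*H_u^2\leq\nu_*M_s^2<E_s$. But on $(u,s]$, $H'<0$ gives
\[
E'_t \;=\; 2H'_t\bigl(H''_t+\nu_*H_t\bigr)\;\leq\;0,
\]
so $E_s\leq E_u$, a contradiction. This monotonicity of $E$ on intervals of constant sign of $H'$ is the missing ingredient; once you insert it, the rest of your write-up matches the paper.
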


\begin{rmq}
\begin{itemize}
\item $\nu_* \leq 0$ can always be assumed.
 \item For $\nu_*>0$, $\phi$ presents damped oscillations with a period $\frac{2\pi}{\nu_*}$ and a magnitude of order $e^{-\eta t}$. The theorem shows that $F$ is interlaced with $\phi$ : $F$ can be above $\phi$ but only if it's already been below, and not for too long.
 \item The rate of convergence is independent from the function $f$, but this result does not give a bound for the operator norm of the semi-group in $L^2$. As will be seen in the sequel, $\phi''_0$ depends on $F''(0)$ and $\|\x f_0\|^2$, which can be arbitrarily large with $F_0 = 1$ (we could obtain a bound by using estimates from the pseudodifferential calculus theory, but our aim was to avoid resorting to this powerfull tool and to stay very elementary). The result in \cite{DMS2009} does the job with no derivative - but not exactly with the $L^2$ norm ; it could be possible to do the same in the present work.
\end{itemize}
\end{rmq}

\begin{proof}
The Gronwall lemma gives
\[\Big[ (\partial_t + \eta)^2 + \nu_*\Big]F_t\leq \Big[ (\partial_t + \eta)^2 + \nu_t\Big]F_t \leq \left(\big[ (\partial_t + \eta)^2 + \nu_t\big]F_t\right)_{t=0}e^{-\lambda t} := C_0e^{-\lambda t}.\]
Let $\phi$ be the solution of
\begin{displaymath}
 \left\{\begin{array}{c}
         \Big[ (\partial_t + \eta)^2 + \nu_*\Big]\phi =  C_0e^{-\lambda t}\label{phi}\\
\\
\phi_0 = F_0,\ \phi'_0 = F'_0.
        \end{array}\right.
\end{displaymath}
Thus
\[\Big[ (\partial_t + \eta)^2 + \nu_*\Big](F_t - \phi_t) \leq 0.\]
In the case where $\nu_* \leq 0$, using twice the Gronwall lemma gives
\begin{eqnarray*}
 (\partial_t + \eta - \sqrt{-\nu_*})(\partial_t + \eta + \sqrt{-\nu_*})(F_t-\phi_t) & \leq & 0\\
 \Rightarrow\hspace{70pt}(\partial_t + \eta + \sqrt{-\nu_*})(F_t-\phi_t) & \leq & 0\\
 \Rightarrow\hspace{143.5pt}F_t - \phi_t & \leq & 0.
\end{eqnarray*}
So now assume $\nu_* > 0$ and define $h_t = e^{\eta t}\po F_t - \phi_t\pf$, so that $h_t''+\nu_* h_t \leq 0$, $h_0 = h'_0 = 0$. Define $M_t = \sup\ \{-h_s,\ 0 \leq s\leq t\}$ so that $h_t \geq -M_t$. $M_t$ is always nondecreasing and it is constant when $h$ is increasing. Le us show that $h_t \leq M_t$ at every time. Assume it is false and consider $s = \inf\{t>0,\ h_t > M_t\}$. $M_t$ is constant for $t$ in a neighborhood of $s$, $h_{s-\varepsilon} < M_s$ and $h_{s+\epsilon} > M_s$ for $\varepsilon >0$ small enough. So, as $h''_s \leq -\nu_* h_s<0$, necessarily $h'_s > 0$, which leads to $(h'_s)^2 + \nu_*h_s^2 > \nu_*M_s^2$. Now consider $u = \sup\{0 \leq t\leq s, h'_t \leq 0\}$ (which exists if $s$ exists, as $h''_0 \leq 0$) and note that $M_u = M_s$. We get $(h'_u)^2 + \nu_*h_u^2 = \nu_*h_u^2 \leq \nu_*M_u^2$. Yet for $t \in(u,s]$, $h_t' > 0$ so
\[h_t'\po h_t''+\nu_* h_t\pf = \frac12\frac{d}{dt}\po(h'_t)^2 + \nu_* h_t^2\pf \leq 0\]
and we've reached a contradiction.

Concerning the length of an interval where $F > \phi$, in other words where $h > 0$, define on this interval $\delta_t = -\frac{1}{h_t}\po h_t''+\nu_* h_t\pf\geq 0$. Thus $h_t$ is solution of
\[\psi_t'' + (\nu_* + \delta_t) \psi_t = 0\]
and so vanishes, according to the Sturm-Liouville comparison theorem (cf. \cite{Dieudonne} for instance), between two successive zeros of $\cos(\nu_* t + \theta)$ for any $\theta$.
\end{proof}

In Section~\ref{SectionInegalite}, the kFP and telegraph models are proven to satisfy an inequality of the form \eqref{equationORdre3}. Section \ref{numerique} is devoted to numerical studies, whose conclusion is that the method can give the good order of magnitude for the exponential rate of convergence, but shouldn't be trusted to compute parameters which accurately give the asymptotically fastest convergence. Finally an appendix gathers the proof of the technical lemmas used throughout this work.

\bigskip 

\textbf{Acknowledgements.} The author thanks Laurent Miclo, who initiated this work, and Sebastien Gadat, for fruitfull discussions.

\section{Third order inequality}\label{SectionInegalite}

We start with considerations applying to both models. To compute the derivatives of $F_t$, we'll split $L$ in its symmetric and anti-symmetric part. More precisely, if $A$ and $B$ are operators on $L^2(\mu)$, we denote by $A^*$ the dual operator of $A$ and by $[A,B]$ the Lie Brackets $AB-BA$. $<,>$ stands for the scalar product on $L^2(\mu)$.
\begin{lem}\label{lemDerivesdeFt}
Assume
\[L = K + R - R^*\]
with $K^* = K$. Then
\begin{eqnarray*}
 F_t' & = &  <(2K)f_t,f_t>\\
F_t'' & = &  <(2K)^2f_t,f_t> + 4 <[K,R]f_t,f_t>\\
F_t''' & = &  <(2K)^3f_t,f_t> + 12  <[K^2,R]f_t,f_t> + 4 <\left[[K,R],R-R^*\right] f_t,f_t>.
\end{eqnarray*}
\end{lem}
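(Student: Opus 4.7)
The plan is to differentiate $F_t = \langle f_t, f_t \rangle$ repeatedly using that $\partial_t f_t = L f_t$ (which holds because $P_t$ preserves the mean $\mu f$, so $\partial_t(P_t - \mu)f = L P_t f = L (P_t-\mu) f$). Each differentiation pulls an $L$ onto one side of the inner product, which I then move across via the adjoint, producing a recursion $F_t^{(k)} = \langle A_k f_t, f_t\rangle$ with $A_0 = \mathrm{Id}$ and $A_{k+1} = A_k L + L^* A_k$. Crucially, since the inner product is real, only the symmetric part of $A_k$ is visible in the quadratic form, and this is what will let me collapse the output of the recursion into the clean expressions announced in the lemma.

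The first derivative is immediate: $A_1 = L + L^* = 2K$, because the antisymmetric piece $R - R^*$ cancels against its adjoint. For $F''$ and $F'''$, the guiding algebraic trick is that for any operator $B$ on $L^2(\mu)$, $\langle Bf,f\rangle = \langle \tfrac{B+B^*}{2} f, f\rangle$. Applied to the mixed commutators that appear after expansion, and using $K^* = K$, this yields the identity $\langle [K^n, R^*] f, f\rangle = -\langle [K^n, R] f, f\rangle$, since $[K^n, R^*]^* = R K^n - K^n R = -[K^n, R]$. Thus every commutator against $R - R^*$ that sits in a hermitian position automatically doubles into a commutator against $R$, which is how the coefficients $4$ and $12$ of the lemma will materialize.

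For $F''$, expanding $A_1 L + L^* A_1 = 2KL + 2L^* K$ and reducing via the identity above gives $(2K)^2 + 4[K, R]$ modulo symmetrization, which is the stated formula. For $F'''$, expanding $A_2 L + L^* A_2$ produces the main term $(2K)^3$, a term $4[K^2, R - R^*]$ which becomes $8[K^2, R]$ by the symmetrization trick, an anticommutator $4(K[K,R] + [K,R]K)$, and an outermost term $4[[K,R], R - R^*]$ which does not admit further reduction (and which is the natural object on which the hypocoercive machinery of Section \ref{SectionInegalite} will have to work). The one genuine algebraic identity needed to merge these is $K[K,R] + [K,R]K = [K^2, R]$, obtained by direct expansion (the $KRK$ cross terms cancel); this adds $4[K^2, R]$ to the already-present $8[K^2, R]$ to produce the factor $12$ in the statement.

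The only real difficulty I anticipate is bookkeeping — tracking numerical constants through three successive applications of the product rule and applying the symmetry reductions at exactly the right moments. No deeper structural input is used; every step is a consequence of the splitting $L = K + (R - R^*)$, the adjoint relations, and the anticommutator identity above.
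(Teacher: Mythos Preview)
Your argument is correct and follows essentially the same route as the paper: both differentiate $F_t$ using $\partial_t f_t = Lf_t$, push operators across the inner product, and reduce via the identities $[K^n,R^*]^* = -[K^n,R]$ and $K[K,R]+[K,R]K=[K^2,R]$. Your presentation is slightly more systematic (the explicit recursion $A_{k+1}=A_kL+L^*A_k$ together with the observation that only the symmetric part of $A_k$ matters), but the actual computation of $F'''$ coincides term by term with the paper's.
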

The proof is given in the appendix. The successive derivation of $F_t$ could also be obtained with iterated $\Gamma$-calculus (see \cite{Ledoux1995}), in particular for models where the invariant measure is not so easy to handle.

\bigskip

As in kinetic models the coercive part $K$  of $L$ only acts on the velocity variable, one cannot find any  $\lambda >0$ such that, for all $f_t$, $F_t' \leq -\lambda F_t$. We call $\mu_1$ (resp. $\mu_2$) the first (resp. second) marginal of $\mu$, namely the position (resp. velocity) distribution at equilibrium. In our specific models we'll have $\mu = \mu_1 \otimes \mu_2$. We call $V = Ker (\mu_2-1)$ the set of functions which do not depend on $y$. The orthogonal projection to $V$ and $V^\perp$ will be respectively denoted by $\pi_V$ and $\pi_\perp$:
\[\pi_V f(x,y) = \mathbb E\po f(x,Z)|Z\sim \mu_2\pf = \int f(x,z)\mu_2(dz),\hspace{20pt}\pi_\perp = 1 -\pi_V.\]
We will note $f_V = \pi_V f_t$ and $f_\perp = \pi_\perp f_t$; as $f_V$ only depends on $x$ we will sometimes consider $f_V$ as a one-parameter function in $L^2(\mu_1)$. Finally let $G_t = \|\x f_t\|^2$, and  recall that a measure $\nu$ is said to satisfy a Poincaré (or spectral gap) inequality with constant $c$ if
\begin{eqnarray}
 \int |\partial_z g(z)|^2d\nu(z) \geq c \int |g(z)|^2d\nu(z)
\end{eqnarray}

whenever $\nu g = 0$.
\begin{lem}\label{GtFtcoercif}
We have $\mu_1 f_V = 0$. In particular, if $\mu_1$ satisfies a Poincaré inequality with constant $c$, we have
 \[G_t \geq \|\x f_V\|^2 \geq c \|f_V\|^2 \]
If furthermore $F_t' \leq - d\|f_\perp\|^2$ then
\begin{eqnarray*}
  \frac1{F_t}\po G_t - \frac{c}{d}F_t'\pf&\geq& c
\end{eqnarray*}

\end{lem}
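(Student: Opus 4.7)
The plan is to prove the three assertions in order, each following cleanly from the previous one together with basic Hilbert-space facts (projections, Pythagoras) and the assumed Poincaré inequality.

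First I would establish $\mu_1 f_V = 0$. Since $f_t = (P_t - \mu)f$ we have $\mu f_t = 0$, and using the product structure $\mu = \mu_1 \otimes \mu_2$ together with Fubini one gets
\[
0 = \mu f_t = \int\int f_t(x,y)\mu_2(dy)\mu_1(dx) = \int f_V(x)\mu_1(dx) = \mu_1 f_V.
\]
Hence $f_V$ is centred with respect to $\mu_1$, so the Poincaré inequality for $\mu_1$ applies and yields $\|\partial_x f_V\|_{L^2(\mu_1)}^2 \geq c\|f_V\|_{L^2(\mu_1)}^2$. Because $f_V$ and $\partial_x f_V$ do not depend on $y$, integrating against $\mu_2$ is trivial and these $L^2(\mu_1)$ norms coincide with the corresponding $L^2(\mu)$ norms.

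Next I would control $G_t$ from below by $\|\partial_x f_V\|^2$. The key observation is that $\pi_V$ is just the conditional expectation with respect to the $y$-variable: since integration in $y$ commutes with $\partial_x$, we have $\partial_x f_V = \pi_V(\partial_x f_t)$. As $\pi_V$ is an orthogonal projection on $L^2(\mu)$, Pythagoras gives
\[
G_t = \|\partial_x f_t\|^2 = \|\pi_V \partial_x f_t\|^2 + \|\pi_\perp \partial_x f_t\|^2 \geq \|\partial_x f_V\|^2 \geq c\|f_V\|^2,
\]
which is the second inequality of the lemma.

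Finally, for the coercivity-type bound I would use the orthogonal decomposition $F_t = \|f_V\|^2 + \|f_\perp\|^2$. The hypothesis $F_t' \leq -d\|f_\perp\|^2$ rearranges to $\|f_\perp\|^2 \leq -F_t'/d$, so that $\|f_V\|^2 \geq F_t + F_t'/d$. Combining with the bound from the previous step,
\[
G_t \geq c\|f_V\|^2 \geq cF_t + \frac{c}{d}F_t',
\]
and dividing by $F_t$ yields exactly the claim. None of the steps is really an obstacle: the only thing worth pausing on is the commutation $\partial_x \pi_V = \pi_V \partial_x$ used to identify $\partial_x f_V$ with the $V$-component of $\partial_x f_t$, together with the resulting Pythagoras estimate; every other inequality is then essentially forced.
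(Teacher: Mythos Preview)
Your proof is correct and follows essentially the same route as the paper: the Fubini argument for $\mu_1 f_V=0$, the orthogonal splitting of $\x f_t$ along $V$ and $V^\perp$ (the paper phrases this as ``$\x^*\x$ is self-adjoint and stabilizes $V$, hence $V^\perp$'', while you use the equivalent commutation $\x\pi_V=\pi_V\x$), and then the combination with the Poincar\'e inequality and the assumption on $F_t'$. The only difference is that you spell out the final step, which the paper dismisses as ``clear''.
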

\begin{proof}
 For the first assertion,
 \[\int f_V(x) d\mu_1(x)  =  \int \po \int f_t(x,y)d\mu_2(y)\pf d\mu_1(x) = \mu f_t = 0\]
 Furthermore $\x^*\x$ is self-ajoint and stabilizes $V$, so it stabilizes $V^\perp$ and
 \[G_t = \|\x (f_V + f_\perp)\|^2 = \|\x f_V\|^2 + \| \x f_\perp\|^2 \geq \|\x f_V\|^2.\]
 Then $G_t - \frac{c}{d}F_t' \geq cF_t$ is clear.
\end{proof}

Now we will show that in both models, the inequality \eqref{equationORdre3} holds for some parameters.

\subsection{The kinetic Fokker-Planck process}

In this section (from Lemma \ref{lemFPderives} to Theorem \ref{thmFPracinecommune}) the generator is
\[L  = y\x - U'(x)\y  - y\y+ \y^2 \tag{\ref{equationLdekFP}}\]

The invariant measure is $\mu  =  e^{-U(x)}dx \otimes e^{-\frac{y^2}2}dy$ so that \[\x^* =  U' - \x, \hspace{20pt}\y^*  =  y -\y.\]
From now on we will make some assumptions on the potential $U$ :
\begin{hyp}\label{U}
 The potential $U$ is smooth, $U''$ is bounded and $\mu_1 = e^{-U(x)}dx$ satisfies a Poincaré inequality with constant $c_U$
 \end{hyp}
The smoothness and the Poincaré inequality conditions are usual assumptions (for instance in \cite{Villani2009}, \cite{DMS2009}) ; however the boundedness of $U''$ is quite restrictive, and could be an artefact due to the lack of subtility of some of our computations.

We can decompose $L = K + R - R^*$ with
\[K=- \y^*\y\hspace{20pt}R=-\x^*\y\hspace{20pt}R^*=\hspace{20pt}-\y^*\x.\]
We compute in appendix the brackets appearing in Lemma \ref{lemDerivesdeFt} :
\begin{lem}\label{lemFPderives}
\begin{eqnarray*}
 \left[K,R\right] & = &  R\\
 \left[K^2,R\right] & = & R\left(1 +2 K\right)\\
 \left[R,R^*\right] & = & U''K + \x^*\x. 
\end{eqnarray*} 
\end{lem}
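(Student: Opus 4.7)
My plan is to reduce everything to a handful of basic commutation relations on the algebra generated by $\x,\y,\x^*,\y^*$ and multiplication by $U''$. First I would record the identities $[\y,\y^*] = 1$ and $[\x,\x^*] = U''$ (which follow from $\y^* = y-\y$ and $\x^* = U'-\x$), together with the fact that any $x$-operator commutes with any $y$-operator since they act on independent variables, and that multiplication by the function $U''(x)$ commutes with every $\y$-operator.

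For the first bracket, since $\x^*$ commutes with both $\y$ and $\y^*$, it factors out immediately:
\[[K,R] \;=\; [\y^*\y,\x^*\y] \;=\; \x^*[\y^*\y,\y] \;=\; \x^*[\y^*,\y]\y \;=\; -\x^*\y \;=\; R.\]

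For the second, I would invoke the Leibniz rule $[K^2,R] = K[K,R]+[K,R]K = KR + RK$ and then rewrite $KR = RK + R$ using the first bracket, which yields $[K^2,R] = 2RK + R = R(1+2K)$. Note that the order matters here: one must multiply $(1+2K)$ on the right of $R$, not on the left.

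The only step where I expect real care to be needed is $[R,R^*]$, because it is where the nontrivial commutator $[\x,\x^*] = U''$ enters and a sign slip is easy. I would compute $RR^* = \x^*\y\y^*\x$ and $R^*R = \y^*\x\x^*\y$ separately. In $RR^*$, the identity $\y\y^* = \y^*\y+1$ together with the fact that $\y^*\y$ passes freely through $\x$ gives $\x^*\x\y^*\y + \x^*\x$. In $R^*R$, the identity $\x\x^* = \x^*\x + U''$ followed by sliding the resulting $\x$-operators through $\y^*$ and $\y$ gives $\x^*\x\y^*\y + U''\y^*\y$. Subtracting, the $\x^*\x\y^*\y$ terms cancel, and using $\y^*\y = -K$ I obtain $[R,R^*] = \x^*\x - U''\y^*\y = \x^*\x + U''K$, as claimed.
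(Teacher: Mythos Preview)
Your proof is correct and follows essentially the same route as the paper: both reduce everything to the basic relations $[\y,\y^*]=1$, $[\x,\x^*]=U''$, and the commutativity of $x$- and $y$-operators, then compute $RR^*$ and $R^*R$ separately for the third identity. The only (minor) difference is that for $[K^2,R]$ you use the derivation property $[K^2,R]=K[K,R]+[K,R]K$ together with the first identity, whereas the paper expands $(\y^*\y)^2$ directly via $\y^*\y=\y\y^*-1$; your version is slightly cleaner but the content is the same.
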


As expected the operator $-\x^*\x$ appears in the third derivative:
\[F_t''' = <(2K)^3 f_t, f_t > + 12 <R(1+2K) f_t, f_t> - 4 <\x f_t, \x f_t> + 4 <U''(x)\y f_t,\y f_t>.\]
It brings the coercivity in position, which is missing in $F_t'$. However it is known (cf. \cite{Gadat2013}, \cite{Volte-Face}) that no linear combination  of $F_t$, $F_t'$, $F_t''$ and $F_t'''$ can be non-positive for every ${f\in L^2(\mu)}$. In the particular cases treated in \cite{Gadat2013} and \cite{Volte-Face}, $G_t$ the norm of the gradient in space appears naturally, thanks to Lemma \ref{GtFtcoercif}. Indeed the smaller eigenvalue of $\y^*\y$ on ${V^\perp = (Ker\y^*\y)^\perp}$ is 1 (Poincaré inequality for the gaussian distribution) and thus
\[ F_t' = 2<K f_t, f_t> = -2\|\y f_\perp\|^2\leq -2 \| f_\perp \|^2.\]
In the other hand, Assumption \ref{U} and Lemma \ref{GtFtcoercif} ensure $G_t \geq c_U \| f_V\|^2$ and lead to
\begin{eqnarray}\label{equationDefinition_gt}
 \frac{1}{F_t}\po G_t - \frac{c_U}{2} F_t'  \pf & \geq&  c_U.
\end{eqnarray}


Finally in order to close the differential inequality we need the first derivative of $G_t$ (see Appendix for the proof):
\begin{lem}\label{lemDerivedeGt_kFP}
 \begin{eqnarray*}
 G_t' & =&  <\left(-2 RR^* + 2RU''\right)f_t,f_t> + 2 G_t.
\end{eqnarray*}
\end{lem}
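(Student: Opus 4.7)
The plan is to differentiate $G_t = \|\x f_t\|^2$ using $\partial_t f_t = Lf_t$, commute $\x$ through $L$, and then repackage everything in terms of $R$ and $R^*$. The starting identity is
$$G_t' = 2<\x L f_t, \x f_t>.$$
From the explicit form \eqref{equationLdekFP}, the only coefficient of $L$ depending on $x$ is $U'(x)$, so a direct computation gives $[\x, L] = -U''(x)\y$ and hence $\x L = L\x - U''\y$. This splits $G_t'$ into a diagonal piece $2<L\x f_t, \x f_t>$ and a cross piece $-2<U''\y f_t, \x f_t>$.

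For the diagonal piece, the anti-symmetric part $R-R^*$ of $L$ cancels under the scalar product and only $K = -\y^*\y$ survives, so $2<L\x f_t, \x f_t> = -2\|\y\x f_t\|^2$. To rewrite this in terms of $RR^*$, I would use the canonical commutation $[\y,\y^*] = 1$ (immediate from $\y^* = y-\y$), which gives $\|\y^* g\|^2 = \|g\|^2 + \|\y g\|^2$ for every $g$. Applied to $g=\x f_t$ and combined with $R^* = -\y^*\x$, this yields
$$\|\y\x f_t\|^2 \;=\; \|R^* f_t\|^2 - G_t \;=\; <RR^* f_t, f_t> - G_t,$$
so the diagonal contribution is $-2<RR^* f_t, f_t> + 2G_t$.

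For the cross piece, $U''$ is a function of $x$ alone and thus commutes with $\y$, while $R = -\x^*\y$ gives $RU'' = -\x^* U''\y$; transposing $\x^*$ back to the other side,
$$-2<U''\y f_t, \x f_t> \;=\; -2<\x^* U''\y f_t, f_t> \;=\; 2<RU'' f_t, f_t>.$$
Adding the two pieces produces $<(-2RR^* + 2RU'')f_t, f_t> + 2G_t$, as claimed. The only delicate step is the use of $[\y, \y^*]=1$: ignoring it would lose the $+2G_t$ term, which is precisely the analogue of the $\x^*\x$ contribution appearing in the bracket $[R,R^*] = U''K + \x^*\x$ of Lemma~\ref{lemFPderives}, and serves as a useful consistency check on the signs and adjoints.
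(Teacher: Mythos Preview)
Your proof is correct and essentially the same as the paper's: both use $[\y,\y^*]=1$ to produce the $+2G_t$ term and trace the $RU''$ contribution to the fact that $U'$ is the only $x$-dependent coefficient in $L$. The sole difference is organizational --- the paper first splits $L=K+(R-R^*)$ and computes $[\x^*\x,R]=RU''$, whereas you first compute $[\x,L]=-U''\y$ and then use the anti-symmetry of $R-R^*$ on the remaining diagonal piece.
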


We can now find a linear combination of $F_t$, $G_t$ and their derivatives which is always non-positive. The terms in $\|f_V\|^2$ will be controlled by $F_t'''$, the ones in $\|f_\perp\|^2$ by $F_t'$ and $G_t'$.

\begin{lem}\label{lemFPinegalite}
 Let $A \in\mathbb R$ and $\beta,k >0$. Under Assumption \ref{U}, there exists $\tau_*\in\mathbb R$ such that for all $\tau \geq \tau_*$
\[Q_3(\partial_t) F_t + Q_1(\partial_t) G_t \leq 0\]
with
\begin{eqnarray*}
Q_1(X) & =& 2\beta\po X + 2(\frac1\beta-1)-\frac k{2c_U\beta}\pf\\ 
\\
Q_3(X) & = & X^3 + AX^2 + \tau X + k.
\end{eqnarray*}
\end{lem}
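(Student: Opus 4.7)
The plan is to substitute the formulas of Lemmas~\ref{lemDerivesdeFt}, \ref{lemFPderives} and~\ref{lemDerivedeGt_kFP} into $Q_3(\partial_t)F_t + Q_1(\partial_t)G_t$, to observe that the constant term of $Q_1$ has been engineered so that the coefficient of $G_t$ (without time derivative) in the full expression is exactly $-k/c_U$, to extract the position-coercive $k\|f_V\|^2$ contribution from $G_t$ via Lemma~\ref{GtFtcoercif}, and to absorb every remaining non-negative term into the reservoirs $2\tau\langle K f_t,f_t\rangle = -2\tau\|\y f_\perp\|^2$ and $-4\beta\|R^* f_t\|^2$ by taking $\tau\geq\tau_*$.

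Expansion yields
\begin{align*}
Q_3(\partial_t)F_t + Q_1(\partial_t)G_t
&= 8\langle K^3 f_t,f_t\rangle + 4A\langle K^2 f_t,f_t\rangle - 2\tau\|\y f_\perp\|^2 \\
&\quad + (12+4A)\langle R f_t,f_t\rangle + 24\langle RK f_t,f_t\rangle - 4\beta\|R^* f_t\|^2 \\
&\quad + 4\langle U''\y f_t,\y f_t\rangle - 4\beta\langle U''\y f_t,\x f_t\rangle + k F_t + c_G\, G_t,
\end{align*}
where, collecting the $-4$ produced by $F_t'''$, the $+4\beta$ produced by $2\beta G_t'$ and the contribution of the constant term of $Q_1$,
\[
c_G = -4 + 4\beta + 2\beta\po 2\po\tfrac1\beta-1\pf - \tfrac{k}{2c_U\beta}\pf = -\tfrac{k}{c_U}.
\]
This is precisely the design of $Q_1$: combined with $F_t = \|f_V\|^2 + \|f_\perp\|^2$ and the Poincaré bound $G_t\geq c_U\|f_V\|^2$ of Lemma~\ref{GtFtcoercif}, the zeroth-order part satisfies $kF_t + c_G G_t \leq k\|f_\perp\|^2$, so the $\|f_V\|^2$-coercivity is produced ``for free''.

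It then remains to dominate the other blocks. Restricting to $V^\perp$, where the Gaussian Poincaré inequality yields $-K\geq 1$ and hence $\|f_\perp\|^2\leq\|\y f_\perp\|^2\leq\|Kf_t\|^2$ and (spectrally) $K^3\leq -K^2$, the pure-$K$ block reads $-\int\lambda(8\lambda^2-4A\lambda+2\tau)|\hat f|^2d\nu$ with $\lambda\geq 1$ in the spectral calculus of $K$, which is $\leq -\tau\|\y f_\perp\|^2$ as soon as $\tau$ exceeds an explicit threshold depending on $A$, with additional slack to absorb any fixed multiple of $\|K f_t\|^2$. The cross terms are estimated by Cauchy--Schwarz and Young using $\langle R f_t,f_t\rangle = -\langle\y f_\perp,\x f_t\rangle$, $\langle RK f_t,f_t\rangle = \langle K f_t, R^* f_t\rangle$ and the analogous splitting for the mixed $U''$-term, producing pieces of the form $\varepsilon_i\|\y f_\perp\|^2 + \delta_i\|R^* f_t\|^2 + \rho_i\|K f_t\|^2 + \sigma_i G_t$ plus lower-order $\|f_\perp\|^2$-contributions: the $\|R^* f_t\|^2$-residues are absorbed by $-4\beta\|R^* f_t\|^2$ (choose $\sum\delta_i\leq 4\beta$), the $\|K f_t\|^2$-residues by the spectral slack of the pure-$K$ block, and all $\|\y f_\perp\|^2$- and $\|f_\perp\|^2$-residues by $-2\tau\|\y f_\perp\|^2$ once $\tau$ is large enough. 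The main technical obstacle is the $\sigma_i G_t$-residue: since $-\tfrac{k}{c_U}G_t$ has already been spent on producing the $\|f_V\|^2$-coercivity, one must decompose the cross terms along the $V\oplus V^\perp$ splitting and exploit the sharper bound $|\langle\y f_\perp,\x f_V\rangle|\leq \|\x f_V\|\|f_\perp\|$ (which follows from integrating out the $y$-variable using $\int y\,d\mu_2=0$ and Cauchy--Schwarz in $\mu_1$), so that the $\|\x f_V\|^2$- and $\|\x f_\perp\|^2$-pieces can be absorbed by the matching parts of $-\tfrac{k}{c_U}G_t = -\tfrac{k}{c_U}\|\x f_V\|^2 - \tfrac{k}{c_U}\|\x f_\perp\|^2$, the Young parameters being then chosen small in the ``position slot'' and large in the ``velocity slot'' (the resulting blow-up on $\|f_\perp\|^2$ being harmlessly absorbed by $-2\tau\|\y f_\perp\|^2$). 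All constants are then explicit in $A,\beta,k,c_U,\|U''\|_\infty$, and $\tau_*$ is the corresponding threshold.
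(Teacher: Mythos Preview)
Your expansion and the identification $c_G=-k/c_U$ are correct, and the general architecture (absorb $V^\perp$-pieces into $\tau F_t'$, cross terms into $-4\beta\|R^*f_t\|^2$, use Poincaré on $G_t$ for the $\|f_V\|^2$-coercivity) is exactly the paper's. However, your handling of the cross terms contains a real gap: you use the budget $-\tfrac{k}{c_U}G_t$ twice. First you spend it entirely to obtain $kF_t+c_GG_t\leq k\|f_\perp\|^2$; then, in the ``main technical obstacle'' paragraph, you propose to absorb the $\|\x f_V\|^2$- and $\|\x f_\perp\|^2$-residues coming from Young's inequality ``by the matching parts of $-\tfrac{k}{c_U}G_t$''. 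If instead you keep $\varepsilon G_t$ aside for absorption and only spend $(\tfrac{k}{c_U}-\varepsilon)G_t$ on Poincaré, you are left with a strictly positive term $\varepsilon c_U\|f_V\|^2$ that nothing in the expression can control (no operator produces negative $\|f_V\|^2$). So the fix you describe does not close.

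The point you are missing is that no $G_t$-residue need ever appear. Every cross term in your display is of the form $\langle (\text{operator on }V^\perp)f_\perp,\,R^*f_t\rangle$: indeed $\langle Rf_t,f_t\rangle=\langle f_\perp,R^*f_t\rangle$, $\langle RKf_t,f_t\rangle=\langle Kf_\perp,R^*f_t\rangle$, and also $-4\beta\langle U''\y f_t,\x f_t\rangle=4\beta\langle RU''f_t,f_t\rangle=4\beta\langle U''f_\perp,R^*f_t\rangle$ (using $R=R\pi_\perp$ and that $U''$ commutes with $\pi_\perp$). Thus one application of Young with parameter $\beta$ on the \emph{bundled} sum
\[
4\langle (3+A+\beta U''+6K)f_\perp,\,R^*f_t\rangle
\;\leq\; 4\beta\|R^*f_t\|^2+\tfrac1\beta\|(3+A+\beta U''+6K)f_\perp\|^2
\]
produces exactly $4\beta\|R^*f_t\|^2$ (cancelling the reservoir) plus a term that is a polynomial in $K$ and $U''$ acting on $f_\perp$ only, hence handled spectrally by $\tau$. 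This is precisely what the paper does: it assembles all cross terms into a single $4R(3+A+\beta U''+6K)$ before estimating, so that the $RR^*$ contribution matches the $-4\beta RR^*$ coming from $2\beta G_t'$ and nothing touches $G_t$. Your sharper bound $|\langle\y f_\perp,\x f_V\rangle|\leq\|\x f_V\|\|f_\perp\|$ is correct but unnecessary.
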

\begin{proof}
The above computations (Lemma \ref{lemDerivesdeFt} and \ref{lemFPderives}) allow to write, for any $A\in\mathbb R$,
 \begin{eqnarray*}
 F_t''' + A F_t'' & = & <\po(2K)^3 +A(2K)^2 + 4\left[[K,R],R-R^*\right] + 12[K^2,R] + 4A [K,R] \pf f_t ,f_t>\\
 & = & <\po(2K)^3 +A(2K)^2 - 4\left[R,R^*\right] + 12R(1+2K) + 4A R \pf f_t ,f_t>\\
 & = & <\po(2K)^3 +A(2K)^2 -4U''K -4\x^*\x+ 4R\po3+A+6K\pf\pf f_t,f_t> \\
 & = & <\po(2K)^3 +A(2K)^2 -4U''K + 4R\po3+A+6K\pf\pf f_t,f_t> - 4 G_t
\end{eqnarray*}
The operator $R\po6+6(2K)+2A\pf$ is annoying because, as a quadratic form on $L^2(\mu)$, it is neither positive nor non-positive ; we'll give for it a not so subtle upper bound by the Cauchy-Schwarz and $2ab \leq a^2 + b^2$ inequalities with the sum of a term $RR^*$ to be controlled by $G_t'$ and of a term only acting on $V^\perp$, controled by $F_t'$.

More precisely, remark that $R=R\pi_\perp$ and furthermore that $\pi_\perp$ commutes with the self-ajoint operators $Id$, $K$ and $U''(x)$ which stabilize $V$ (and so $V^\perp$ too). Thus, for any $\beta >0$,
\begin{eqnarray*}
 & & <\po4R\pi_\perp\po3+A+\beta U''+6K\pf\pf f_t,f_t>  \\
\\
& =& 2<\po3+A+\beta U''+6K\pf f_\perp , (2R^*)f_t>  \\
 \\
&\leq & 4\beta <RR^* f_t,f_t> + \frac1\beta<\po3+A+\beta U''+6K\pf^2 f_\perp,f_\perp>.
\end{eqnarray*}
We obtain, taking into account lemma \ref{lemDerivedeGt_kFP},
\begin{eqnarray*}
\lefteqn{F_t''' + A F_t'' + 2\beta G_t'+4(1-\beta)G_t =}\\
\\
&  & <\po(2K)^3 +A(2K)^2 -4U''K + 4R\po3+A+6K\pf - 4\beta R R^* +4 \beta RU'' \pf f_t,f_t>\\
\\
& = & <\po(2K)^3 +A(2K)^2 -4U''K  - 4\beta R R^*  \pf f_t,f_t> + <4R\po3+A+\beta U''+ 6K\pf f_t,f_t>\\
\\
& \leq & <\Big((2K)^3 +A(2K)^2 -4U''K+ \frac1\beta\po3+A+\beta U''+6K\pf^2 \Big) f_\perp,f_\perp>\\
\\
& = & <\Big((2K)^3 +\po A+\frac9\beta\pf (2K)^2 +\left(\frac{18+6A}{\beta}+4 U''\right)(2K) + \frac1\beta(3+A+\beta U'')^2\Big) f_\perp,f_\perp>.
\end{eqnarray*}
Now we want to replace the terms with $U''$ by something that does not depend on $x$ (under Assumption \ref{U}).
\begin{eqnarray*}
 <U'' K f_t,f_t> & = & -\iint U''(x) \po\y f_t(x,y)\pf^2 \mu(dx,dy) \\
 & \leq & <(\min U'')K f_t,f_t>
\end{eqnarray*}
and
\begin{eqnarray*}
 <(3+A+\beta U'')^2 f_\perp,f_\perp> & \leq & \| 3+A+\beta U'' \|_\infty^2 \|f_\perp\|^2.
\end{eqnarray*}
So by denoting
\[P(X) = X^3 + \po A+\frac9\beta\pf X^2 + \left(\frac{18+6A}{\beta}+4\min U''\right) X + \frac1\beta\| 3+A+\beta U'' \|_\infty^2,\]
the previous computation leads to
\begin{eqnarray*}
 F_t''' + A F_t'' + 2\beta G_t'+4(1-\beta)G_t & \leq & <P(2K) f_\perp, f_\perp>.
\end{eqnarray*}
The eigenvalues of $2K$ on $V^\perp$ being the $-2n$ for $n\in\mathbb Z_+$ (the eigenvectors are the so-called Hermite polynomials), consider any $k\geq0$ and
\begin{eqnarray*}
 \tau_k & =&  \underset{n\geq 1}{\max} \frac{P(-2n)+k}{2n} \hspace{20pt}(<\infty)
\end{eqnarray*}
so that $P(2K) + 2\tau K + k$ gets to be a non-positive bilinear form on $V^\perp$ for all $\tau \geq \tau_k$, in other words
\begin{eqnarray*}
 <P(2K)f_\perp,f_\perp > & \leq  &-\tau F_t' - k \|f_\perp\|^2.
\end{eqnarray*}
On the other hand $G_t \geq c_U \|f_V\|^2$ (cf. Lemma \ref{GtFtcoercif}) so that
 \begin{eqnarray*}
F_t''' + A F_t'' + 2\beta G_t' + \left(4(1-\beta)-\frac{k}{c_U}\right)G_t + \tau F_t' + kF_t& \leq & 0.
\end{eqnarray*}
\end{proof}
Now it remains to get rid of $G_t$ thanks to \eqref{equationDefinition_gt}, and to find a common root for $Q_1$ and $Q_3$ in order for inequation \eqref{equationORdre3} to hold.
\begin{thm}\label{thmFPracinecommune}
Under assumption \ref{U}, there exist $\lambda,\eta>0$, and $t\mapsto \nu_t\geq \nu_*\in\mathbb R$ with ${\mathcal Re(\eta - \sqrt{-\nu_*})>0}$ such that
\[(\partial_t + \lambda)\Big[ (\partial_t + \eta)^2 + \nu_t\Big]F_t\leq 0.\] 
\end{thm}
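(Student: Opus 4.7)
The plan is to combine the third-order inequality $Q_3(\partial_t)F_t + Q_1(\partial_t)G_t \leq 0$ of Lemma~\ref{lemFPinegalite} with the sign $G_t\geq 0$, by forcing $Q_1$ and the cubic $Q_3$ to share a common real negative root $-\lambda$, and then absorbing the residual $G_t$ term into a time-dependent coefficient $\nu_t$.

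Since $Q_1(X)=2\beta(X+\lambda)$ with $\lambda := 2(1/\beta-1)-k/(2c_U\beta)$, I impose that $-\lambda$ be also a root of $Q_3$, which forces the factorisation $Q_3(X)=(X+\lambda)((X+\eta)^2+\nu_*)$ together with the three algebraic identities
\[A=2\eta+\lambda,\qquad \tau=2\eta\lambda+\eta^2+\nu_*,\qquad k=\lambda(\eta^2+\nu_*).\]
Lemma~\ref{lemFPinegalite} then rewrites as
\[(\partial_t+\lambda)\Bigl[\bigl((\partial_t+\eta)^2+\nu_*\bigr)F_t+2\beta G_t\Bigr]\leq 0.\]
Defining $\nu_t:=\nu_*+2\beta G_t/F_t$ (well defined while $F_t>0$, the statement being trivial once $F_t$ vanishes), the inner bracket equals $((\partial_t+\eta)^2+\nu_t)F_t$, and $\nu_t\geq\nu_*$ is immediate from $G_t\geq 0$. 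The sign requirement $\mathrm{Re}(\eta-\sqrt{-\nu_*})>0$ follows at once from $\eta>0$ together with $\eta^2+\nu_*=k/\lambda>0$.

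All that remains is an existence check for the parameters: one must exhibit $\beta,\lambda,\eta>0$ with $\beta(2+\lambda)<2$ (which ensures $k>0$) and, crucially, $\tau\geq\tau_*$, the threshold provided by Lemma~\ref{lemFPinegalite}. This last inequality is the main obstacle: a glance at the definition $\tau_*=\max_{n\geq 1}(P(-2n)+k)/(2n)$ shows that the dominant contribution at $n=1$ is the term $\|3+A+\beta U''\|_\infty^2/(2\beta)\sim A^2/(2\beta)$, which grows much faster than the available $\tau=2\eta\lambda+k/\lambda$ when any single one of the free parameters is pushed to an extreme. I would handle it by a balanced choice: fix $\beta$ small and $\lambda$ moderate so that $k$ stays close to $4c_U$, and then adjust $\eta$; since $P(-2n)\sim-8n^3$ the maximum in $\tau_*$ is attained at a bounded $n$, which reduces the condition $\tau\geq\tau_*$ to a finite arithmetic check in the remaining free variable, yielding a concrete admissible triple $(\lambda,\eta,\nu_*)$ and concluding the proof.
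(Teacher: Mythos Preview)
Your overall structure is sound and is a legitimate simplification of the paper's argument: once $Q_1$ and $Q_3$ share the root $-\lambda$, you factor and absorb the residual $2\beta G_t$ into $\nu_t$ using only $G_t\geq 0$, whereas the paper uses the stronger Poincar\'e-based bound \eqref{equationDefinition_gt} to shift part of $G_t$ into $F_t'$ and $F_t$ before defining $\eta$ and $\nu_*$. Both routes yield the theorem; the paper's gives sharper constants, yours is cleaner.

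The genuine gap is in your existence paragraph. The regime you describe --- $\beta$ small, $\lambda$ moderate, then ``adjust $\eta$'' --- cannot satisfy $\tau\geq\tau_*$. With your constraints, $\tau=2\eta\lambda+k/\lambda$ stays bounded (since $k\to 4c_U$ and $\lambda$ is moderate), while a direct computation gives
\[
P(-2n)=\text{(bounded)}+\frac{(6n-3-A)^2}{\beta},
\]
so $\tau_*=\max_{n\geq 1}(P(-2n)+k)/(2n)$ blows up like $1/\beta$: even if you tune $A=3$ to kill the $n=1$ term, the $n=2$ term contributes $9/\beta$. Conversely, keeping $\beta$ fixed and pushing $\eta$ large makes $A\sim 2\eta$, hence $\tau_*\gtrsim A^2$ quadratic in $\eta$, while your $\tau$ is only linear in $\eta$. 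Either way the inequality fails.

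The correct regime --- and this is exactly what the paper does, via its continuity argument from $\beta=1$, $k=0$ --- is to fix $\beta\in(0,1)$ and $\eta>0$ and send $\lambda\to 0^+$. Then $A\to 2\eta$ and $k\to 4c_U(1-\beta)>0$, so $\tau_*$ (which depends only on $A,\beta,k,U''$) stays bounded, while $\tau=2\eta\lambda+k/\lambda\to+\infty$. Hence $\tau\geq\tau_*$ for $\lambda$ small enough, and your conditions $\lambda,\eta>0$, $\eta^2+\nu_*=k/\lambda>0$ are all met.
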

\begin{proof}
Let $A \in\mathbb R$ and $\beta\in(0,1]$. Let $k \in[0,4c_U(1-\beta)]$, so that the root of
\[ Q_1(X) = 2\beta\po X + 2(\frac1\beta-1)-\frac k{2c_U\beta}\pf\]
is zero for $k=4c_U(1-\beta)$ and negative otherwise. Let $\tau_k$ be given by Lemma \ref{lemFPinegalite}; we choose $\tau\geq\tau_{4c_U(1-\beta)}$ large enough such that for all $k\geq 0$ 
\[Q_3(X) = X^3 + AX^2 + \tau X + k\]
has only one non-positive root, which is continuous with respect to  $k$. This root is zero for $k=0$, negative otherwise. Thus by continuity there exists a $k\in[0,4c_U(1-\beta)]$ such that $Q_1$ and $Q_3$ have a common root. We call this root $-\lambda$. Now Lemma \ref{lemFPinegalite} can be rewritten, with some constant $u,v,w\in\mathbb R$,
\begin{eqnarray*}\label{equationlocale}
 0 & \geq & (\partial_t + \lambda) \po F_t'' + u F_t' + v F_t + w G_t \pf\\
 & = & (\partial_t + \lambda) \po F_t'' + \po u + w\frac{c_U}{2}\pf F_t' + \po v + w\po G_t - \frac{c_U}{2}F_t'\pf \frac 1{F_t}\pf F_t \pf.
\end{eqnarray*}
Let $\eta =  \frac{2u+wc_U}{4}$, $\nu_* = v + w c_U - \eta^2$ and
\begin{eqnarray*}
\nu_t& = & \po v + w\po G_t - \frac{c_U}{2}F_t'\pf \frac 1{F_t}\pf - \eta^2.
\end{eqnarray*}
Inequation \eqref{equationDefinition_gt} exactly means $\nu_t \geq \nu_*$. It remains to show that, for some parameters, $-\lambda$ and $\mathcal Re(-\eta \pm \sqrt{-\nu_*})$ are negative. These are the real parts of the roots of
\begin{eqnarray*}
 Q_3(X) +\frac {c_U}2 X Q_1(X) + c_U Q_1(X)
& =& X^3 + (A+\beta c_U)X^2 + (\tau + 2c_U -\frac k 2) X + 2c_U(1-\beta).
\end{eqnarray*}
Take $\beta = 1$, $A > - c_U$, $\tau$ large enough and $k = 0$. Then zero is a common root and
\begin{eqnarray*}
 Q_3(X) +\frac {c_U}2 X Q_1(X) + c_U Q_1(X) & =& X\po X^2 + ( A+\beta c_U)X + (\tau + 2c_U)\pf,
\end{eqnarray*}
so that $\lambda = 0$. $X^2 + ( A+\beta c_U)X + (\tau + 2c_U)$ has positive coefficients so if it has real roots, they are negative. Otherwise  ${\mathcal Re(\eta - \sqrt{-\nu_*}) = \eta = \frac 12 ( A+\beta c_U) > 0}$. Now if $\beta$ is chosen slightly less than 1 and $k$ is such that $Q_1$ and $Q_3$ still have a common root, relying again on continuity, we still have $\mathcal Re(\eta - \sqrt{-\nu_*}) >0$ but $-\lambda$ becomes a real root of a polynomial with positive coefficients and thus is negative.
\end{proof}

\subsection{The telegraph process}

This section is a replica of the previous one. From Lemma \ref{lemMesureInvarCT} to Theorem \ref{thmCTracinecommune}, the generator is
\[L f(x,y) = y\x f (x,y) + a(x,y) \po f(x,-y) - f(x,y)\pf \tag{\ref{equationLdeCT}}.\]
As in the kFP case we compute the derivatives of $F_t$ and $G_t$, proceed with a differential equation and conclude with a particular choice of the parameters which are in parties to the above approach. First, the invariant measure has to be explicited:

\begin{lem}\label{lemMesureInvarCT}
 The unique (up to a constant) invariant measure of the telegraph model is \[\mu = e^{-U(x)}dx\otimes \frac12\po\delta_1+\delta_{-1}\pf (dy),\] where
 \[U'(x) = a(x,1)-a(x,-1).\]
\end{lem}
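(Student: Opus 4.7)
The plan is to verify directly that $\int Lf\, d\mu = 0$ for every $f$ in a suitable core, which characterises invariance, and then to invoke a standard irreducibility argument for the uniqueness part.

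For the invariance, I would split $\int L f\, d\mu$ into the transport contribution $\int y\x f\, d\mu$ and the jump contribution $\int a(x,y)(f_- - f)\, d\mu$. Integrating the transport term first in $y$ against $\mu_2 = \frac12(\delta_1 + \delta_{-1})$ produces $\frac12\po\x f(x,1) - \x f(x,-1)\pf$, and integration by parts in $x$ against $e^{-U(x)}\,dx$ gives
\[\int y\x f\, d\mu \ =\ \frac12\int U'(x)\po f(x,1)-f(x,-1)\pf e^{-U(x)}\,dx.\]
For the jump term, using the $y\mapsto -y$ symmetry of $\mu_2$ to rename variables yields
\[\int a(x,y)(f_--f)\, d\mu \ =\ -\frac12\int \po a(x,1)-a(x,-1)\pf\po f(x,1)-f(x,-1)\pf e^{-U(x)}\,dx.\]
The two contributions cancel exactly by the very definition $U'(x) = a(x,1) - a(x,-1)$, which establishes $\int Lf\,d\mu = 0$ on the core and hence $\hat L \mu = 0$.

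For uniqueness up to a multiplicative constant, I would argue by irreducibility. Provided $a$ is positive enough that $Y$ can flip arbitrarily often and $U$ grows at infinity so that $e^{-U}$ is integrable, the PDMP is Harris recurrent: a Lyapunov function built from $U$ gives a return property to a compact set, and the jump mechanism combined with the deterministic flow allows one to reach any open set from any starting point in finite time. Harris's theorem then guarantees a unique invariant probability measure up to scaling, and the one exhibited above is therefore \emph{the} invariant measure.

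The only real difficulty is not the computation (the existence part is essentially a one-line integration by parts once the $y$-symmetry is used) but making precise the hypotheses on $a$ and $U$ that ensure irreducibility and recurrence for the uniqueness part; these are implicit in the statement, and once they are fixed, the proof reduces to the cancellation above.
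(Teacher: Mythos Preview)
Your argument is correct and matches the paper's proof almost exactly: the invariance is obtained by the same integration by parts in $x$ combined with the $y\mapsto -y$ symmetry (the paper packages this via the identity $yU'(x)=a(x,y)-a(x,-y)$, while you unfold the two-point sum explicitly), and uniqueness is deduced from irreducibility. The only cosmetic difference is that the paper appeals directly to irreducibility and aperiodicity, whereas you phrase the uniqueness through Harris recurrence with a Lyapunov function; both lead to the same conclusion.
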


\begin{proof}
 Note that $yU'(x)=a(x,y)-a(x,-y)$. We check
 \begin{eqnarray*}
  \mu L f & = & \iint y\x f(x,y)e^{-U(x)}dx dy + \iint a(x,y) \po f(x,-y) - f(x,y)\pf e^{-U(x)}dx dy\\
  & = & \iint y f(x,y) U'(x)e^{-U(x)}dx dy + \iint \po a(x,-y) - a(x,y) \pf f(x,y) e^{-U(x)}dx dy\\
  & = & 0
 \end{eqnarray*}
for all smooth $f\in L^2(\mu)$, so that $\mu$ is invariant. In the other hand, the process is clearly irreducible (from any point $X_0$ it can reach any ball in finite time with positive probability) and aperiodic ($X_t$ can go back to $X_0$ at an arbitrarily small time $s$ with positive probability) and uniqueness of its invariant probability follows.
\end{proof}
We note $f_-(x,y) = f(x,-y)$ and remark  that \[V = \{f \in L^2(\mu), f_- = f\}\hspace{20pt}V^\perp = \{f \in L^2(\mu), f_- = -f\}\] and, keeping the previous notation $\pi_V$ and $\pi_\perp$ (or $f_V$ and $f_\perp$) for the orthogonal projections on $V$ and $V^\perp$,
\[\pi_\perp f = \frac{f-f_-}{2}  \hspace{20pt} \pi_V f =\frac{f+f_-}{2} = f_V.\] 
Thus $yV = V^\perp$ and $yV^\perp = V$, and more precisely
\[\pi_\perp y = y\pi_V \hspace{20pt} \pi_V y = y\pi_\perp.\] 
Now recall that $\x^* = \x - U'$ and define
\begin{eqnarray*}
 Kf & =& -(a+a_-)\pi_\perp f\\
Rf & = & y \po\x - U'\pf\pi_\perp f\\
& = &  -\pi_V y \x^* \pi_\perp f.\\
R^* f& = & -\pi_\perp y\x \pi_V f\\
& = & -y\x\po\frac{f + f_-}{2}\pf.
\end{eqnarray*}
Then
\begin{eqnarray*}
 K + R - R^* & = & -(a+a_-)\pi_\perp + y (\x-U') \pi_\perp  + y \x \pi_V \\
 & = & \po-a - a_- - yU'\pf \pi_\perp + y\x (\pi_V + \pi_\perp)\\
 & = & -2a\pi_\perp + y\x\\
 & = & L.
\end{eqnarray*}
Note that $a+a_-$ and $U'$ do not depend on $y$ and so, seen as self-adjoint operators on $L^2(\mu)$, they commute with $\pi_\perp$ and $\pi_V$. In particular this gives $K^* = K$.

Now thanks to these considerations we can compute the following brackets (see the appendix for details).
\begin{lem}\label{lemCTFtderives}
 \begin{eqnarray*}
 [K,R] & = & R(a+a_-)\\
\left[K^2,R\right] & = & -R(a+a_-)^2\\
\left[\left[K,R\right],R-R^*\right] & = & R^*R(a+a_-) - R(a+a_-)R^*\\
RR^* & = & \x^*\x \pi_V\\
R^*R & = & (\x^*\x + U'')\pi_\perp.
\end{eqnarray*}
\end{lem}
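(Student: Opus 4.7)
The plan is to exploit several simple structural observations that follow directly from the expressions just given for $K$, $R$, $R^*$: the image of $R$ lies in $V$ and $R$ vanishes on $V$, so $\pi_\perp R = 0$ and $R\pi_V = 0$; dually $R^*$ maps into $V^\perp$ and annihilates $V^\perp$; the multiplication operators $a+a_-$, $U'$, $U''$ depend only on $x$ and therefore commute with $\pi_V$ and $\pi_\perp$; and finally $y^2 = 1$ on $\{\pm 1\}$, with $y\pi_V = \pi_\perp y$. With these in hand each identity reduces to a short algebraic manipulation.

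For $[K,R]$ and $[K^2, R]$: since $K = -(a+a_-)\pi_\perp$ and $\pi_\perp R = 0$, we immediately have $KR = 0$, and likewise $K^2 R = 0$ using $K^2 = (a+a_-)^2 \pi_\perp$. Therefore $[K,R] = -RK$ and $[K^2,R] = -RK^2$, and the commutation of $a+a_-$ with $\pi_\perp$ together with $R\pi_\perp = R$ lets me absorb the remaining projection factor to obtain $R(a+a_-)$ and $-R(a+a_-)^2$ respectively. For the double bracket I would expand
\[
\left[\left[K,R\right], R-R^*\right] \;=\; R(a+a_-)(R-R^*) - (R-R^*)R(a+a_-),
\]
and observe that $R^2 = 0$ (because $\pi_\perp R = 0$) and that $R(a+a_-)R = 0$ (because $Rg \in V$, multiplication by the $x$-only function $a+a_-$ preserves $V$, and the outer $R$ then annihilates it). Two of the four terms in the expansion vanish and the stated formula drops out.

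For $RR^*$ and $R^*R$ I would first streamline the expressions to $Rf = -y\,\x^*\pi_\perp f$ and $R^*f = -y\,\x\pi_V f$; the outer projections in the original definitions are redundant since $y\,\x^*\pi_\perp f \in V$ and $y\,\x\pi_V f \in V^\perp$. Then $RR^*f = y\,\x^*(y\,\x\pi_V f) = y^2\,\x^*\x\pi_V f = \x^*\x\pi_V f$, using that $\x^*$ commutes with multiplication by the $y$-only function $y$. Likewise $R^*R f = y^2\,\x\x^*\pi_\perp f$, and the one-line commutator identity $\x\x^* = \x^*\x + U''$, obtained from $\x^* = U' - \x$, yields $R^*R = (\x^*\x + U'')\pi_\perp$. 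No step presents a genuine obstacle; the only thing to watch is bookkeeping of the various projections and the repeated collapse $y^2 = 1$.
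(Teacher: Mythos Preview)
Your proof is correct and follows essentially the same route as the paper's: both arguments hinge on the observations that $\pi_\perp R = 0$ (so $KR=0$ and $R^2=0$), that the $x$-only multiplier $a+a_-$ commutes with the projections, and that $y^2=1$ together with $\partial_x\partial_x^* = \partial_x^*\partial_x + U''$ handles the $RR^*$ and $R^*R$ identities. The only cosmetic difference is that you state these structural facts once at the outset, whereas the paper invokes them inline; the logical content is identical.
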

From now on we make the following assumptions:
\begin{hyp}\label{a}
The rates $a(.,1)$ and $a(.,-1)$ are positive, smooth, bounded and with bounded derivatives, and
 \begin{eqnarray*}
  a_* & := & \underset{\mathbb R}\min\po a(.,1)+a(.,-1)\pf > 0.
 \end{eqnarray*}
 Furthermore $U(x)=\int_0^x\po a(z,1)-a(z,-1)\pf dz$ satisfies Assumption \ref{U}.
\end{hyp}
Then, again $F_t$ is controlled by $G_t$ and $F_t'$. Indeed, Under Assumption~\ref{a},
\[F_t' = 2<Kf_t,f_t> = -2<(a+a_-)f_\perp,f_\perp> \leq -2 a_* \|f_\perp\|^2 \]
and (Lemma \ref{GtFtcoercif}),
\[G_t = \| \x f\| ^2 \geq \| \x f_V\| ^2 \geq c_U\|f_V\|^2 \]
so that 
\begin{eqnarray}\label{definitiongtCT}
 \frac{1}{F_t}\po G_t - \frac{c_U}{2 a_*} F_t' \pf & \geq & c_U.
\end{eqnarray}

We will also need the derivative of $G_t$, computed in the appendix:
\begin{lem}\label{CTGtderive}
 \[G_t' = <2\po RU'' -U''K -R^*R(a+a_-)\pf f_t,f_t>.\]
\end{lem}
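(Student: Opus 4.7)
The plan is to differentiate $G_t = \|\x f_t\|^2$ directly, commute $\x$ past $L$, and then collapse everything into the stated operator form using the structural identities of Lemma~\ref{lemCTFtderives}.

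First, since $\partial_t f_t = Lf_t$, one has $G_t' = 2<\x Lf_t, \x f_t>$. From $Lf = y\x f + a(f_- - f)$, and using that $\x$ commutes with multiplication by $y$ and with the parity exchange $f\mapsto f_-$, a direct computation yields the commutation relation $\x L = L\x - 2(\x a)\pi_\perp$. Substituting, and then symmetrising the first term via $L+L^* = 2K$, gives
\[G_t' \;=\; -2<(a+a_-)\x f_\perp, \x f_\perp> \;-\; 4<(\x a)f_\perp, \x f_t>,\]
where one also uses that $\pi_\perp$ commutes with $\x$ and with multiplication by $a+a_-$.

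Next I would decompose $\x a$ by parity: since $a - a_- = yU'$, one has $(\x a)_V = \tfrac12(a+a_-)'$ and $(\x a)_\perp = \tfrac12 yU''$. The $V$-part combines with the first scalar product through the Leibniz identity $\x\po(a+a_-)f_\perp\pf = (a+a_-)'f_\perp + (a+a_-)\x f_\perp$; a single integration by parts in $x$ then collapses both contributions to $-2<\x^*\x(a+a_-)f_\perp, f_\perp>$. The $V^\perp$-part of the cross term contributes $-2<yU''f_\perp, \x f_V>$, because $yU''f_\perp$ lies in $V$ and so only the $V$-component of $\x f_t$ pairs with it.

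It remains to recognize both pieces in the stated form. The identity $R^*R = (\x^*\x + U'')\pi_\perp$ of Lemma~\ref{lemCTFtderives}, together with $K = -(a+a_-)\pi_\perp$, rewrites the first piece as $-2<R^*R(a+a_-)f_t, f_t> - 2<U''K f_t, f_t>$; and unfolding $R = -\pi_V y\x^*\pi_\perp$ followed by one more integration by parts yields $<RU''f_t, f_t> = -<yU''f_\perp, \x f_V>$, so the second piece is $2<RU''f_t, f_t>$. Summing the three contributions gives the claimed identity. The main nuisance is the parity bookkeeping — tracking which expressions sit in $V$ versus $V^\perp$ — together with the integration by parts in $x$ that converts the raw $\x a$ factors into a $\x^*\x$-form suitable for Lemma~\ref{lemCTFtderives}; once those are handled, every piece slots into place.
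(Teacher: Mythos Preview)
Your argument is correct. The route differs from the paper's in its organization rather than in substance: the paper writes $G_t' = 2\langle \x^*\x L f_t,f_t\rangle$, splits $L = K + R - R^*$, and observes that the antisymmetric part contributes exactly $2\langle[\x^*\x,R]f_t,f_t\rangle$ with $[\x^*\x,R] = RU''$ computed in one line at the operator level; the $K$-part is then rewritten via $\x^*\x\pi_\perp = R^*R - U''\pi_\perp$ from Lemma~\ref{lemCTFtderives}. You instead commute $\x$ through $L$ (the relation $\x L = L\x - 2(\x a)\pi_\perp$), symmetrise $L$ to $K$, and then split the residual $(\x a)$-factor by parity before recombining with Leibniz. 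Both land on the same two identities from Lemma~\ref{lemCTFtderives}; the paper's version sidesteps the $V/V^\perp$ bookkeeping by keeping everything at the level of operator commutators, while yours makes the Leibniz step and the role of $a-a_- = yU'$ more transparent.
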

We are now ready to prove a result similar to Lemma \ref{lemFPinegalite} 

\begin{lem}\label{lemCTinegalite}
 Under Assumption \ref{a}, there exist polynomials $\tilde Q_1$ and $\tilde Q_3$ respectively of first and third order such that
 \[\tilde Q_3(\partial_t )F_t + \tilde Q_1(\partial_t)G_t \leq 0\]
\end{lem}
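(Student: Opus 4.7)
The plan is to follow the architecture of the proof of Lemma~\ref{lemFPinegalite}, with two substitutions dictated by the different algebra of Lemma~\ref{lemCTFtderives}: the coercivity in $x$ now comes from $R^*R = (\x^*\x + U'')\pi_\perp$ instead of from the bracket $[R,R^*]$, and the Hermite spectral decomposition of $K$ on $V^\perp$ is replaced by the pointwise bounds $a_* \leq a+a_- \leq \|a+a_-\|_\infty$ guaranteed by Assumption~\ref{a}.

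Plugging Lemma~\ref{lemCTFtderives} into Lemma~\ref{lemDerivesdeFt} yields, for any $A\in\mathbb R$,
\[F_t''' + AF_t'' = \left\langle \bigl((2K)^3 + A(2K)^2 + 4R\,(a+a_-)(A-3(a+a_-)) + 4R^*R(a+a_-) - 4R(a+a_-)R^*\bigr) f_t,\, f_t\right\rangle.\]
Two observations then match the kFP proof nearly term by term. First, the last summand is non-positive, since $\langle R(a+a_-)R^* f_t,f_t\rangle = \langle(a+a_-)R^*f_t,R^*f_t\rangle \geq 0$, so it can be discarded. Second, the $R^*R(a+a_-)$ contribution is, up to the sign, exactly the corresponding piece in $G_t'$ furnished by Lemma~\ref{CTGtderive}: adding $2G_t'$ to both sides cancels it, at the price of the extra contributions $-4\langle U''K f_t,f_t\rangle$ and $+4\langle R U'' f_t,f_t\rangle$.

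All surviving ``$R$-only'' terms are of the form $\langle Rhf_t,f_t\rangle$ with $h$ a bounded function of $x$ alone (here $h \in \{(a+a_-)(A-3(a+a_-)),\,U''\}$). Since such $h$ commutes with $\pi_\perp$ and $R^*f_t\in V^\perp$, the Cauchy-Schwarz trick of Lemma~\ref{lemFPinegalite} applies:
\[|\langle R h f_t, f_t\rangle| = |\langle h f_\perp, R^* f_t\rangle| \leq \tfrac{\beta}{2}\langle RR^*f_t,f_t\rangle + \tfrac{1}{2\beta}\langle h^2 f_\perp,f_\perp\rangle,\]
where $\langle RR^*f_t,f_t\rangle = \|\x f_V\|^2 \leq G_t$. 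The upshot is an estimate of the form $F_t''' + AF_t'' + 2G_t' \leq \gamma G_t + \langle P f_\perp, f_\perp\rangle$ with $P$ a bounded self-adjoint multiplication operator on $V^\perp$. Since on $V^\perp$ the operator $2K$ acts as multiplication by $-2(a+a_-)\in[-2\|a+a_-\|_\infty,\,-2a_*]$, one can choose $\tau$ large enough (depending on $A$, $\beta$ and the $L^\infty$-norms of $a+a_-$ and $U''$) so that $\langle Pf_\perp,f_\perp\rangle \leq \tfrac{\tau}{2}F_t' - k\|f_\perp\|^2$ for some $k>0$. Writing $k\|f_\perp\|^2 = kF_t - k\|f_V\|^2$ and using $\|f_V\|^2 \leq G_t/c_U$ from Lemma~\ref{GtFtcoercif} collects everything into the announced inequality $\tilde Q_3(\partial_t)F_t + \tilde Q_1(\partial_t)G_t \leq 0$.

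The main obstacle compared with the kFP case is the loss of a clean spectral decomposition of $K$ on $V^\perp$: here $K$ is multiplication by the non-constant function $-(a+a_-)(x)$, which does \emph{not} commute with the $\x^*\x$ hidden in $R^*R$. Consequently one must lean on the $L^\infty$ bounds provided by Assumption~\ref{a} rather than on an eigenvalue computation, and keep careful track of the commutators such as $[(a+a_-),\x]$ that surface when expanding $R^*R(a+a_-)$ and $RU''$; these yield extra $\langle f_\perp,\x f_\perp\rangle$-type terms which must in turn be Cauchy-Schwarz'ed into $G_t$- and $\|f_\perp\|^2$-contributions. None of this affects the overall structure, but it does explain the rather strong boundedness hypotheses imposed in Assumption~\ref{a}.
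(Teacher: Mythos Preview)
Your argument, once the sign slip is fixed (you need $\langle Pf_\perp,f_\perp\rangle \leq -\tau F_t' - k\|f_\perp\|^2$, not $\tfrac{\tau}{2}F_t'$; as written the inequality fails for large $\tau$), does prove the lemma as stated. The route, however, differs from the paper's in a way that matters for what comes next.

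You discard the term $-4\langle R(a+a_-)R^*f_t,f_t\rangle$ because it is non-positive, and you let the only occurrence of $G_t$ on the right come from $\langle RR^*f_t,f_t\rangle=\|\x f_V\|^2\leq G_t$. This forces the constant term of your $\tilde Q_1$ to be \emph{negative}: one obtains $\tilde Q_1(X)=2X-(2\beta+\tfrac{k}{c_U})$, whose root is strictly positive. That is enough for the bare existence statement of the lemma, but it is useless for Theorem~\ref{thmCTracinecommune}, where one needs a common root $-\lambda$ of $\tilde Q_1$ and $\tilde Q_3$ with $\lambda>0$. The paper therefore does \emph{not} throw the $R(a+a_-)R^*$ term away: it keeps it as the source of $-\|\x f_V\|^2$, uses a weighted Cauchy--Schwarz $|\langle Rhf_t,f_t\rangle|\leq \tfrac{1}{\beta}\|(a+a_-)^{-1/2}hf_\perp\|^2+\beta\langle R(a+a_-)R^*f_t,f_t\rangle$ so that the Cauchy--Schwarz cost combines with it into $-4(1-\beta)a_*\|\x f_V\|^2$, and adds $2(1+h)G_t'$ rather than $2G_t'$; the extra parameter $h$ produces a residual $-4hR^*R(a+a_-)$ which (after another Cauchy--Schwarz) yields a matching $-c\|\x f_\perp\|^2$ so that the two pieces reassemble into a genuine $-cG_t$. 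The outcome is $\tilde Q_1(X)=2(1+h_k)(X+\lambda_k)$ with $\lambda_k\geq 0$.

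Incidentally, the commutator $[\x,a+a_-]$ you worry about in your last paragraph never appears in your own scheme (you cancel $R^*R(a+a_-)$ exactly with $2G_t'$ and treat $RU''$ by Cauchy--Schwarz without expanding); it is precisely the paper's residual $-4hR^*R(a+a_-)$ that brings it in, and this is why Assumption~\ref{a} asks for bounded derivatives of $a$.
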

\begin{proof}
Lemma \ref{lemDerivesdeFt} and \ref{lemCTFtderives} give, for any $A\in\mathbb R$,
\begin{eqnarray*}
\lefteqn{ F_t''' + A F_t'' =}\\
  & & <\big((2K)^3+A(2K)^2 + 4R\po-3(a+a_-)^2 + A(a+a_-)\pf + 4 R^* R(a+a_-) - 4 R(a+a_-)R^*\big) f_t,f_t>.
\end{eqnarray*}
We consider any $h \geq 0$ and write
\begin{eqnarray*}
  F_t''' + A F_t'' +2(1+h)G_t'
  & = & <\big((2K)^3+A(2K)^2- 4(1+h)U''K-4h R^* R(a+a_-)  - 4 R(a+a_-)R^*\big) f_t,f_t>\\
  & & + <4R\po-3(a+a_-)^2 + A(a+a_-)+(1+h)U''\pf  f_t,f_t>
\end{eqnarray*}
Now for the extra $-4h R^* R(a+a_-)$, for any $\alpha\in(0,1]$, \emph{via} the Cauchy-Schwarz inequality,
\begin{eqnarray*}
 & & -4h <R^* R(a+a_-)f_t,f_t> \\
 & = & -4h <\x (a+a_-) f_\perp,\x f_\perp>\\
 & = & -4h<(a+a_-)\x f_\perp,\x f_\perp> - 4h <\po\x(a+a_-)\pf f_\perp,\x f_\perp>\\
  & = & -4h<(a+a_-)\x f_\perp,\x f_\perp> - 4h <(a+a_-)^{-\frac{1}{2}}\po\x(a+a_-)\pf f_\perp,(a+a_-)^{\frac{1}{2}}\x f_\perp>\\
 & \leq & -4ha_*(1 - \alpha) \|\x f_\perp \|^2 + \frac h\alpha \| (a+a_-)^{-\frac{1}{2}}\po\x(a+a_-)\pf f_\perp \|^2.
\end{eqnarray*}
Then following again the steps of Lemma \ref{lemFPinegalite}, for any $\beta >0$, we bound
\begin{eqnarray*}
 & & <4R\po-3(a+a_-)^2 + A(a+a_-)+(1+h)U''\pf f_t, f_t >\\
 &= & 2<\po-3(a+a_-)^2 + A(a+a_-)+(1+h)U''\pf(a+a_-)^{-\frac12} f_\perp, 2(a+a_-)^{\frac12}R^* f_t>\\
 & \leq &  \frac{1}{\beta}\|\po-3(a+a_-)^2 + A(a+a_-)+(1+h)U''\pf(a+a_-)^{-\frac12} f_\perp\|^2 + 4\beta <R(a+a_-)R^* f_t,f_t>
\end{eqnarray*}
Gathering all this, and recalling $K = -(a+a_-)\pi_\perp$ we get
\begin{eqnarray*}
\lefteqn{ F_t''' + A F_t'' +2(1+h)G_t' \leq}\\
  & & <\po-8(a+a_-)^3+4A(a+a_-)^2 + \frac{\po-3(a+a_-)^2 + A(a+a_-)+(1+h)U''\pf^2}{\beta (a+a_-)}+4(1+h)U''(a+a_-)\pf f_\perp,f_\perp>\\
  & & -4ha_*(1 - \alpha) \|\x f_\perp \|^2 + \frac h\alpha \| \frac{\x(a+a_-)}{\sqrt{a+a_-}} f_\perp \|^2 - 4 <R(a+a_-)(1 -\beta)R^* f_t,f_t>.
\end{eqnarray*}
For the last term, as long as $\beta \leq 1$,
\begin{eqnarray*}
 -4 <R(a+a_-)(1 -\beta)R^* f_t,f_t> & = & -4 <(a+a_-)(1 -\beta) \x f_V,\x f_V>\\
 & \leq & -4 a_*(1 -\beta) \| \x f_V\|^2.
\end{eqnarray*}
Choose $\alpha < 1$, let $k\in\big[0,4c_U(1-\beta)\big]$ and define $h_k$ such that
\[-4h_k\po 1-\alpha \pf = -4(1 - \beta) + \frac{k}{a_*c_U}.\]
We will note
\[\lambda_k = a_*\frac{2h_k\po 1-\alpha \pf}{(1+h_k)}\]
and define the function
\[H = -8(a+a_-)^3+4A(a+a_-)^2 + \frac{\po-3(a+a_-)^2 + A(a+a_-)+(1+h_k)U''\pf^2}{\beta (a+a_-)} + \frac{h_k\po\x(a+a_-)\pf^2}{\alpha (a+a_-)}+4(1+h_k)U''(a+a_-), \]
so that everything comes down to
\begin{eqnarray*}
 F_t''' + A F_t'' +2(1+h_k)\po G_t' + \lambda_k G_t\pf  & \leq& <H f_\perp, f_\perp> - \frac{k}{c_U}\|\x f_V\|^2.
\end{eqnarray*}
Under Assumtion \ref{a}, in one hand Lemma \ref{GtFtcoercif} gives $\|\x f_V\|^2 \geq c_U\| f_V \|^2$ and, in the other hand $H$ is bounded
; so there exists $\tau_k$ such that
\[H - \tau_k (a+a_-) + k\leq 0.\] 
Thus, for all $\tau \geq \tau_k$,
\begin{eqnarray*}
 <H f_\perp, f_\perp> & \leq & -\tau F_t'- k \| f_\perp\|^2.
\end{eqnarray*}
Finally we get
\begin{eqnarray*}
 F_t''' + A F_t'' + \tau F_t' + k F_t  + 2(1+h_k)\po G_t' + \lambda_k G_t\pf  \leq 0.
\end{eqnarray*}

\end{proof}

Here ends the proof that \eqref{equationORdre3} is satisfied for the telegraph model:
\begin{thm}\label{thmCTracinecommune}
Under assumption \ref{U}, there exist $\lambda,\eta>0$, and $t\mapsto \nu_t\geq \nu_*\in\mathbb R$ with $\mathcal Re(\eta - \sqrt{-\nu_*})>0$ such that
\[(\partial_t + \lambda)\Big[ (\partial_t + \eta)^2 + \nu_t\Big]F_t\leq 0.\] 
\end{thm}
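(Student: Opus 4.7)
The strategy mirrors the proof of Theorem~\ref{thmFPracinecommune}: start from the differential inequality of Lemma~\ref{lemCTinegalite}, arrange its polynomials to share a common real non-positive root so that a factor $(\partial_t + \lambda)$ may be pulled out, and absorb the remaining $G_t$ contribution into a time-dependent coefficient via \eqref{definitiongtCT}.

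Concretely, Lemma~\ref{lemCTinegalite} yields $\tilde Q_3(\partial_t)F_t + \tilde Q_1(\partial_t)G_t \leq 0$ with
\[\tilde Q_3(X) = X^3 + AX^2 + \tau X + k,\qquad \tilde Q_1(X) = 2(1+h_k)(X+\lambda_k),\]
where the parameters $\alpha\in(0,1)$, $\beta\in(0,1]$, $A\in\mathbb R$, $k\in[0,4c_U(1-\beta)]$, and $\tau\geq\tau_k$ are free, while $h_k$ and $\lambda_k$ are determined by $\alpha,\beta,k$. Note $\lambda_k\geq 0$, with $\lambda_k=0$ precisely when $k=4c_U(1-\beta)$. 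The first step is to choose $\tau$ large enough that $\tilde Q_3$ has a single real non-positive root $-r(k)$ depending continuously on $k$; this root vanishes for $k=0$ and is strictly negative for $k>0$. Since $\lambda_k$ varies continuously from $\lambda_0>0$ (for $k=0$) down to $0$ (for $k=4c_U(1-\beta)$), an intermediate value argument produces a $k_\star\in[0,4c_U(1-\beta)]$ at which $r(k_\star)=\lambda_{k_\star}$. Call this common root $-\lambda$.

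Once the factorization is in hand, write $\tilde Q_3(X)=(X+\lambda)(X^2 + uX + v)$. The inequality becomes
\[(\partial_t+\lambda)\Bigl[F_t'' + uF_t' + vF_t + 2(1+h_{k_\star})G_t\Bigr] \leq 0.\]
Using \eqref{definitiongtCT} in the form $G_t = \frac{c_U}{2a_*}F_t' + F_t\,\bigl(G_t - \tfrac{c_U}{2a_*}F_t'\bigr)/F_t$ and collecting terms, set
\[\eta = \tfrac12\Bigl(u + (1+h_{k_\star})\tfrac{c_U}{a_*}\Bigr),\qquad \nu_t = v + 2(1+h_{k_\star})\frac{G_t-\tfrac{c_U}{2a_*}F_t'}{F_t} - \eta^2,\]
and $\nu_* = v + 2(1+h_{k_\star})c_U - \eta^2$. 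The lower bound $\nu_t\geq \nu_*$ is precisely~\eqref{definitiongtCT}, producing the desired inequality.

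It remains to verify the spectral condition $\mathcal Re(\eta-\sqrt{-\nu_*})>0$. As in the kFP case, I would first treat the degenerate endpoint $\beta=1,\ k=0$: then $\tilde Q_1$ and $\tilde Q_3$ share the root $0$, so $\lambda=0$ and the second-order factor becomes a quadratic $X^2 + (A+\text{positive})X + (\tau+\text{positive})$ with positive coefficients, hence with roots of negative real part (ensured by picking $A$ and $\tau$ large enough depending on the bounds on $a,a_-,U'',\partial_x(a+a_-)$ given by Assumption~\ref{a}). A continuity perturbation $\beta<1$ then pushes $\lambda$ strictly positive (since $-\lambda$ is a real root of a polynomial with positive coefficients) while keeping the second-order factor's roots in the left half-plane. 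The main difficulty I anticipate is the bookkeeping: the coefficients $u,v,\tau_k,h_k,\lambda_k$ in the telegraph case depend on the auxiliary parameter $\alpha$ and on $L^\infty$ norms of $a+a_-$, $U''$, and $\partial_x(a+a_-)$, so one must check that the continuity/IVT argument producing the common root survives while simultaneously preserving the positivity of the quadratic's discriminant-free condition. The boundedness assumptions in \ref{a} are exactly what makes the function $H$ bounded, so the polynomial $\tilde Q_3$ can indeed be chosen independent of $x$, and the argument closes.
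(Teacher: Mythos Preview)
Your proposal is correct and follows essentially the same route as the paper: the same intermediate-value argument on $k\in[0,4c_U(1-\beta)]$ to force a common root of $\tilde Q_1$ and $\tilde Q_3$, the same use of \eqref{definitiongtCT} to convert the residual $G_t$-term into the time-dependent $\nu_t$, and the same endpoint analysis at $\beta=1,\ k=0$ (where $h_k=\lambda_k=0$, $\tilde Q_1(X)=2X$) followed by a continuity perturbation to $\beta<1$. The paper phrases the spectral check by assembling the single polynomial $\tilde Q_3(X)+\tfrac{c_U}{2a_*}X\tilde Q_1(X)+c_U\tilde Q_1(X)$ and observing its coefficients are positive for $A>-c_U/a_*$, which is exactly your ``$X^2+(A+\text{positive})X+(\tau+\text{positive})$'' observation; your worry about the bookkeeping with $\alpha$ and the $L^\infty$ bounds is unnecessary, since those only enter through the threshold $\tau_k$ and not the structure of the argument.
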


\begin{proof}
We keep the notations used in the proof of Lemma \ref{lemCTinegalite}; our purpose is to find some parameters for wich
\begin{eqnarray*}
 \tilde Q_1(X) & = & 2(1+h_k)\po X + \lambda_k\pf
\end{eqnarray*}
and 
\begin{eqnarray*}
\tilde Q_3(X) & = & X^3 + AX^2  + \tau X + k 
\end{eqnarray*}
have a common root. Let $\beta \in(0, 1]$, $\alpha\in(0,1)$, $A \in \mathbb R $ be fixed, we let $k$ evolve in ${[0, 4 c_U (1 - \beta)]}$. The root of $ \tilde Q_1$,
\[-\lambda_k = -a_*\frac{2(1 - \beta) - \frac{k}{2c_U}}{(1+h_k)}\]
 is zero for $ k = 4 c_U (1 - \beta)$, else negative. We take $\tau \geq \tau_{4 c_U (1 - \beta)}$ large enough so that, for any $k \in {[0, 4 c_U (1 - \beta)]}$,  $\tilde Q_3$ has a unique non-positive real root, which is continuous with respect to $k$. This real root is zero for $k=0$ and negative otherwise, thus by continuity there exists a $k\geq 0$ such that $\tilde Q_3$ and $\tilde Q_1$ have a common root. We call $-\lambda$ this root and consider $u,v,w\in\mathbb R$ such that
\begin{eqnarray*}\label{equationlocale}
 \tilde Q_3(\partial_t)F_t + \tilde Q_1(\partial_t) G_t & = & (\partial_t + \lambda) \po F_t'' + u F_t' + v F_t + w G_t \pf\\
 & = & (\partial_t + \lambda) \po F_t'' + \po u + w\frac{c_U}{2a_*}\pf F_t' + \po v + w\po G_t - \frac{c_U}{2a_*}F_t'\pf \frac 1{F_t}\pf F_t \pf.
\end{eqnarray*}
Let $\eta =  \frac{2u+a_*^{-1}c_U}{4}$, $\nu_* = v + w c_U - \eta^2$ and
\begin{eqnarray*}
\nu_t& = & \po v + w\po G_t - \frac{c_U}{2a_*}F_t'\pf \frac 1{F_t}\pf - \eta^2.
\end{eqnarray*}
\eqref{definitiongtCT} exactly gives $\nu_t \geq \nu_*$. It remains to find some parameters for which $-\lambda$ and $\mathcal Re(-\eta \pm \sqrt{-\nu_*})$ are negative. These are the real parts of the roots of \[\tilde Q_3(X) +\frac {c_U}{2a_*} X \tilde Q_1(X) + c_U\tilde Q_1(X).\]
 Take $A >-c_U a_*^{-1}$, $\alpha\in(0,1)$, $\beta =1$, $\tau$ large enough and $k=0$, then  $h_k = \lambda_k = 0$ and zero is a common root. Thus $\tilde Q_1(X) = 2X$ and
\[\tilde Q_3(X) +\frac {c_U}{2a_*} X\tilde Q_1(X) + c_U\tilde Q_1(X) = X\po X^2 + \po A + \frac{c_U}{a_*}\pf X + \tau + 2 c_U\pf.\]
If $X^2 + \po A + c_U a_*^{-1}\pf X + \tau + 2 c_U$, polynomial with positive coefficients, has real roots, they are negative, and else $\mathcal Re(\eta \pm \sqrt{-\nu_*}) = \eta = \frac12\po A + c_U a_*^{-1}\pf >0$. Now if $\beta$ si slightly less than 1 this is still the case by continuity, but then $-\lambda$ is a real root of a polynomial with positive coefficient so $\lambda >0$.

\end{proof}

\section{Numerical studies}\label{numerique}
Although Theorem \ref{propconvergence} gives ``explicit'' bounds for the rate of convergence to equilibrium, it is not very easy to handle, as we get a set of polynomials $\Pi$ depending on several parameters in some set $\mathcal C$, such that the best rate obtained \emph{via} Theorem \ref{propconvergence} is
\[r = \underset{A,\beta,k,\tau \in \mathcal C}\max \min \left\{-\mathcal Re(\alpha),\ \alpha\ \text{root of }\Pi(X)\right\}\]
Using the notations of Lemma \ref{lemFPinegalite} and \ref{lemCTinegalite},
\[\Pi(X) = Q_3(X) + c_U\po\frac1{2 } X +1\pf Q_1(X)\]
for the kFP process, and
\[\Pi(X) =\tilde Q_3(X) + c_U\po\frac1{2 a_*} X +1\pf\tilde Q_1(X)\]
for the telegraph one.  $\mathcal C$ is the set of parameters for which $Q_1$ and $Q_3$ have a common root and the inequality is proven, for instance in the kFP model one need
\[0<\beta<1,\hspace{20pt} A>c_U,\hspace{20pt}k>0,\hspace{20pt}\tau \geq \underset{n\geq 1}{\max} \frac{P(-2n)+k}{2n}\]
($P$ defined in lemma \ref{lemFPinegalite}). Nevertheless we can numerically deal with this and compare the obtained results with the theorical rates when they are known, namely in the case of a quadratic potential for the kFP process (see \cite{Gadat2013}) and for the constant jump rate of the telegraph on the torus (see \cite{Volte-Face}). Obviously, such examples may just be considered as some benchmarks and are not really interesting processes for MCMC algorithm. As a consequence, once we will have seen the numerical rates can be of the right order of magnitude for the kFP model, we won't push this analysis deeper.

\bigskip

First of all, we adapt Lemma \ref{lemFPinegalite} in order to allow some changes in the parameters. The same computations lead to
\begin{lem}\label{lemFPinegalitevbU}
 Consider the generator
 \[L_{v,b,U} =  v\left(by\x - U'(x)\y\right) + \left(\y^2 -b y\y\right),\]
with invariant measure $e^{-U(x)}dx\otimes e^{-b\frac{y^2}{2}}$. Under Assumption \ref{U}, for any $A,k \in\mathbb R$ and $\beta >0$ there exists $\tau\in\mathbb R$ such that
 \[Q_3(\partial_t) F_t + Q_1(\partial_t) G_t \leq 0\]
with
\begin{eqnarray*}
Q_1(X) & =& v^2\beta\po X + 2b(\frac1\beta-1)-\frac k{2v^2c_U\beta}\pf\\ 
\\
Q_3(X) & = & X^3 + A X^2 + \tau X + k\\
\\
\tau & \geq & \underset{n\geq 1}{\max} \frac{P(-2bn)+k}{2bn}\\
\\
P(X) & = & X^3 +\po A+\frac{9b^2}\beta\pf X^2 +\left(6b^2\frac{3b+A}{\beta}+v^2\min\po (6b-4)U''\pf\right)X\\
& & + \frac1\beta(3b^2+Ab+v^2\beta||U''||_\infty)^2. 
\end{eqnarray*}
In the other hand,
\[G_t - \frac{c_U}{2 b} F_t' \geq c_U F_t\]
\end{lem}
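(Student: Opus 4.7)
The plan is to replicate the proof of Lemma \ref{lemFPinegalite} while tracking carefully how the parameters $v$ and $b$ enter at each step. The rescaled Gaussian $e^{-by^2/2}dy$ has adjoint $\partial_y^* = by - \partial_y$, commutator $[\partial_y^*, \partial_y] = -b$, and Poincaré constant $b$, which alter every intermediate formula by explicit factors.

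I would first decompose $L_{v,b,U} = K + R - R^*$ with $K = \partial_y^2 - by\partial_y = -\partial_y^*\partial_y$ (self-adjoint), $R = -v\partial_x^*\partial_y$, and $R^* = -v\partial_y^*\partial_x$; a direct expansion verifies $R - R^* = v(by\partial_x - U'\partial_y)$. Recomputing the three brackets of Lemma \ref{lemFPderives} then yields
\[
[K,R] = bR, \qquad [K^2,R] = bR(b+2K), \qquad [R,R^*] = v^2 U''K + v^2 b\, \partial_x^*\partial_x,
\]
so that Lemma \ref{lemDerivesdeFt} gives
\[
F_t''' + AF_t'' = \langle((2K)^3 + A(2K)^2)f_t, f_t\rangle + 4b\langle R(3b+6K+A)f_t, f_t\rangle - 4bv^2\langle U''Kf_t, f_t\rangle - 4b^2 v^2 G_t.
\]
Since $[\partial_x, L_{v,b,U}] = -vU''\partial_y$, the calculation of Lemma \ref{lemDerivedeGt_kFP} adapts to
\[
G_t' = \langle -\tfrac{2}{v^2}RR^* f_t + 2RU''f_t, f_t\rangle + 2bG_t.
\]

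I would then form $F_t''' + AF_t'' + v^2\beta\, G_t'$ and collect the resulting $R$-terms as $2\langle RMf_t, f_t\rangle = 2\langle Mf_\perp, R^*f_t\rangle$ with $M = 2b(3b+6K+A) + v^2\beta U''$ (using $R = R\pi_\perp$ and that $M$ stabilizes $V^\perp$). A Cauchy--Schwarz--Young split with weight tuned to cancel the $RR^*$ contribution absorbs all $RR^*$ terms and leaves, on $f_\perp$, a polynomial in $2K$ whose $U''$-valued coefficients I bound via $\langle U''Kf_\perp, f_\perp\rangle \leq (\min U'')\langle Kf_\perp, f_\perp\rangle$ (both sides nonpositive, with the inequality following from pointwise $U'' \geq \min U''$) and via $\langle(\cdot)^2 f_\perp, f_\perp\rangle \leq \|\cdot\|_\infty^2 \|f_\perp\|^2$ for the constant term. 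This produces the stated scalar polynomial $P$. Since the spectrum of $2K$ on $V^\perp$ is $\{-2bn : n\geq 1\}$ (rescaled Hermite polynomials being eigenfunctions for the Gaussian of variance $1/b$), taking $\tau \geq \tau_k := \max_{n\geq 1}(P(-2bn)+k)/(2bn)$ forces $\langle P(2K)f_\perp, f_\perp\rangle \leq -\tau F_t' - k\|f_\perp\|^2$. The residual $G_t$-term is then converted into $-k\|f_V\|^2$ via Lemma \ref{GtFtcoercif}, which assembles the inequality $Q_3(\partial_t)F_t + Q_1(\partial_t)G_t \leq 0$.

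The auxiliary bound $G_t - \frac{c_U}{2b}F_t' \geq c_U F_t$ follows immediately: the Gaussian Poincaré inequality with constant $b$ gives $F_t' = 2\langle Kf_t, f_t\rangle = -2\|\partial_y f_\perp\|^2 \leq -2b\|f_\perp\|^2$, so $-\frac{c_U}{2b}F_t' \geq c_U\|f_\perp\|^2$, and combining with $G_t \geq c_U\|f_V\|^2$ from Lemma \ref{GtFtcoercif} produces $c_U(\|f_V\|^2 + \|f_\perp\|^2) = c_U F_t$. The sole real difficulty is bookkeeping---the factors $v$ and $b$ must be threaded consistently through the Cauchy--Schwarz weight, the expansion of $M^2$, and the Hermite eigenvalues---since no new conceptual ingredient enters beyond what is already present in Lemma \ref{lemFPinegalite}.
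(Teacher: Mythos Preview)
Your proposal is correct and follows exactly the approach the paper indicates: the paper's own ``proof'' is merely the sentence ``The same computations lead to'' (referring to Lemma~\ref{lemFPinegalite}), and you have sketched precisely those computations with the parameters $v,b$ threaded through. Your decomposition $L_{v,b,U}=K+R-R^*$, the rescaled commutators $[K,R]=bR$, $[K^2,R]=bR(b+2K)$, $[R,R^*]=v^2U''K+v^2b\,\partial_x^*\partial_x$, the formula for $G_t'$, the Cauchy--Schwarz step on $2\langle RMf_t,f_t\rangle$, and the use of the spectrum $\{-2bn:n\geq 1\}$ of $2K$ on $V^\perp$ are all correct adaptations, as is the final Poincar\'e argument for $G_t-\tfrac{c_U}{2b}F_t'\geq c_UF_t$.
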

In a MCMC algorithm, $U$ would be given while $b^{-1}$ the variance of the invariant speed and $v$ the ratio between the antisymmetric and symmetric parts of the dynamics should be chosen to get the fastest convergence to equilibrium (given the instantaneous randomness injected in the system).

The real exponential rate of convergence for $L_{v,b,U}$ with $U''$ constant is (see \cite{Gadat2013})
\[r_{theor} = 1 - \sqrt{\left(1-\frac{4v^2U''}{b}\right)_+}.\]

\begin{figure}
\centering
\includegraphics[scale=0.5]{./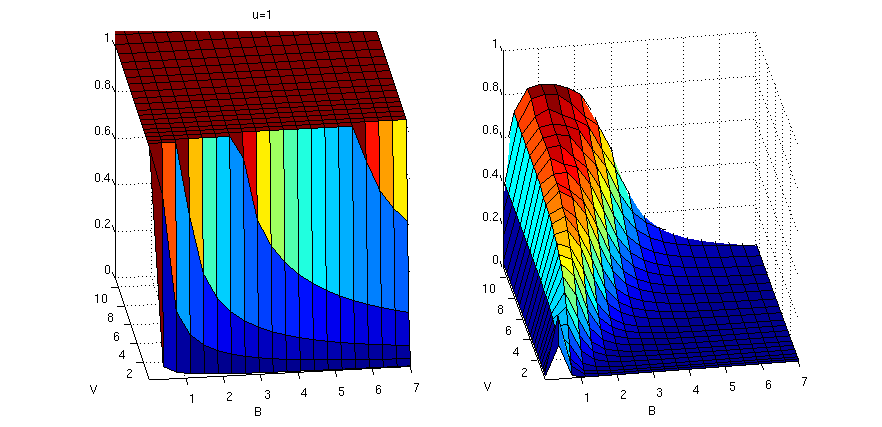}
 \caption{Left: theorical rate computed in \cite{Gadat2013}. Right: numerical rate given by Theorem \ref{thmFPracinecommune}. When $v$ is small (\emph{i.e.} the antisymetric part of $L$ is in a sense weak), the numerical rate is not very accurate and can miss the values for which the non reversible process is faster (asymptotically) than the reversible one. It becomes better with big values of $v$ and for some $b$ we get the right order of magnitude}\label{fig:imagetheor}
\end{figure}

Here are some numerical optimal rates given by Lemma \ref{lemFPinegalitevbU} (to be compared to the theorical one, in brackets if it is not 1) for $U'' = 1$ and different values of $v$ and $b$ (see also figure~\ref{fig:imagetheor}):

\bigskip

\begin{tabular*}{0.75\textwidth}{@{\extracolsep{\fill}}l|lllll}
$v\setminus b$ & 0.2 & 0.5 & 1  & 3 & 5 \\
\hline
0.5 & 0.11  & 0.21  & 0.02  & $03.10^{-4}$ (0.18) & $07.10^{-5}$ (0.10)\\
\hline
1 & 0.20  & 0.21  & 0.07  & $01.10^{-3}$  & $02.10^{-4}$ (0.55)\\
\hline
2 & 0.24  & 0.26  & 0.23  & $06.10^{-3}$  & $01.10^{-3}$ \\
\hline
10 & 0.26 & 0.32 & 0.56 & 0.23 & 0.03
\end{tabular*}

\section{Appendix}

\underline{Proof of Lemma \ref{lemDerivesdeFt}:}
\begin{proof}
From $\partial_t f_t = Lf_t$ comes
 \begin{eqnarray*}
 F_t' & = & 2 <Lf_t,f_t>\\
& = & <(2K)f_t,f_t>\\
F_t'' & = & 2 <L^2f_t,f_t> + 2 <Lf_t,Lf_t>
\end{eqnarray*}
From $(L+L^*)L = 2KL$ we compute
\begin{eqnarray*}
 F_t'' & =& <(2K)^2 f_t,f_t> + 4<KR f_t,f_t> - 4<KR^*f_t,f_t>\\
& = & <(2K)^2f_t,f_t> + 4 <[K,R]f_t,f_t>\\
 F_t''' & = & <(2K)^2Lf_t,f_t> + <L^*(2K)^2f_t,f_t> + 4 <[K,R]Lf_t,f_t>+ 4 <L^*[K,R]f_t,f_t>.
\end{eqnarray*}
Concerning the first two terms,
\begin{eqnarray*}
 (2K)^2L + L^*(2K)^2 & = & (2K)^3 + (2K)^2(R-R^*) - (R-R^*)(2K)^2\\
 & = & (2K)^3 + 4\po[K^2,R] - [K^2,R^*]\pf
\end{eqnarray*}
and $[K^2,R^*]^* = - [K^2,R]$. For the last two terms, $[K,R]K+K[K,R] = [K^2,R]$ thus
\begin{eqnarray*}
 4 <[K,R]Lf_t,f_t>+ 4 <L^*[K,R]f_t,f_t> & = & 4<[K^2,R] f_t,f_t> + 4 <\left[[K,R],R-R^*\right] f_t,f_t>.
\end{eqnarray*}
\end{proof}

\underline{Proof of Lemma \ref{lemFPderives}:}
\begin{proof}
First $\x$ and $\x^*$ commute with $\y$ and $\y^*$, and
\[\x\x^*  =  \x^*\x + U'',\hspace{20pt}\y\y^*  = \y^*\y + 1.\]
Now
\[\left[K,R\right] =  \y^*\y\x^*\y - \x^*\y\y^*\y =\po\y^*\y - \y\y^*\pf\x^*\y\\ = R \]
As well,
\begin{eqnarray*}
 \left[K^2,R\right] & =& -(\y^*\y)^2\x^*\y + \x^*\y(\y^*\y)^2\\
 & =& -\x^*(\y\y^* - 1)^2\y + \x (\y\y^*)^2\y\\
 & = & 2\x^*\y\y^* \y - \x^*\y\\
 & = & R(2K+1).
\end{eqnarray*}
Finally
\begin{eqnarray*}
 [R,R^*] & = & \x^*\y\y^*\x - \x\y^*\x^*\y\\
 &=& \x^*\x(\y^*\y+1) - (\x^*\x + U'') \y^*\y\\
 & = & -U''\y^*\y + \x^*\x\\
 & = & U'' K + \x^*\x.
\end{eqnarray*}
\end{proof}

\underline{Proof of Lemma \ref{lemDerivedeGt_kFP}:}
\begin{proof}
  \begin{eqnarray*}
 G_t' & =& 2<\x Kf_t,\x f_t> +2<\x (R-R^*) f_t,\x f_t>\\
& = & -2 <\x^*\x\y^*\y f_,f_t> + 2 <[\x^*\x,R]f_t,f_t>.
\end{eqnarray*}
For the first term
\[\x^*\x\y^*\y=\x^*\po \y\y^* - 1\pf\x=RR^* - \x^*\x,\]
and for the second one
\begin{eqnarray*}
\x^*\x R & = & -\x^*\x \x^*\y \\
 & = & -\x^*\po \x^*\x + U''\pf\y\\
 & =& R \x^*\x +R U'' .
\end{eqnarray*}
Finally,
\begin{eqnarray*}
 G_t' & = & <\left(-2 RR^* + 2\x^*\x + 2RU''\right)f_t,f_t> \\
 & = & <\left(-2 RR^* + 2RU''\right)f_t,f_t> + 2 <\x f_t,\x f_t>.
\end{eqnarray*}
\end{proof}

\underline{Proof of Lemma \ref{lemCTFtderives}:}
\begin{proof}
\[[K,R]= (a+a_-)\pi_\perp\pi_V y \x^* \pi_\perp - \pi_V y \x^* \pi_\perp (a+a_-)\pi_\perp= 0 + R(a+a_-)\]
The same computation holds for the second one :
 \begin{eqnarray*}
  [K^2,R] & =& -\po(a+a_-)\pi_\perp\pf^2\pi_V y \x^* \pi_\perp + \pi_V y \x^* \pi_\perp (a+a_-)^2\pi_\perp\\
  & = & 0 -R(a+a_-)^2
 \end{eqnarray*}
Since $(a+a_-)$ commutes with $\pi_V$, a term $\pi_V \pi_\perp$ appears in $R(a+a_-)R = R^2(a+a_-)=0$ we have
 \begin{eqnarray*}
  \left[\left[K,R\right],R-R^*\right] & = & \left[R(a+a_-),-R^*\right]\\
  & = & R^*R(a+a_-) - R(a+a_-)R^*.
 \end{eqnarray*}
Finally, as $\pi_\perp y \pi_V = y\pi_V$ and $y^2=1$ we get
\[RR^*= ( y \x^*\pi_\perp)\po \pi_\perp \x y \pi_V\pf= \x^*\x \pi_V\]
and similarly, as $\pi_V y  \pi_\perp = y\pi_\perp$,
\[R^*R=\x\x^*\pi_\perp= (\x^*\x + U'')\pi_\perp.\]
\end{proof}

\underline{Proof of Lemma \ref{CTGtderive}:}
\begin{proof}
 \begin{eqnarray*}
 G_t' & = & 2 <\x^*\x Lf_t,f_t>\\
 & =& 2<\x^*\x R f_t, f_t> - 2<\x^*\x R^* f_t, f_t> -2  <\x^*\x (a+a_-)\pi_\perp f_t,f_t>\\
&=& 2<[\x^*\x,R] f_t, f_t> - 2 <\x^*\x (a+a_-)\pi_\perp f_t,f_t>.
\end{eqnarray*}
For the first term,
\[[\x^*\x,R]=-\x^*\x\pi_V y \x^* \pi_\perp + \pi_V y \x^* \pi_\perp \x^*\x=R[\x,\x^*]=RU''\]
We conclude, according to Lemma \ref{lemCTFtderives}, with $\x^*\x\pi_\perp = R^*R - U''\pi_\perp$.
\end{proof}

\bibliographystyle{plain}
\bibliography{biblio}

\begin{thebibliography}{10}

\bibitem{logSob}
C.~Ané, S.~Blachère, D.~Chafaï, P~Fougères, I.~Gentil, F.~Malrieu,
  C.~Roberto, and G.~Scheffer.
\newblock {\em Sur les inégalités de Sobolev logarithmiques}.
\newblock Panoramas et synthèses. Société mathématique de France, Paris,
  2000.

\bibitem{Arnold1}
A.~{Arnold}, E.~{Carlen}, and Q.~{Ju}.
\newblock Large-time behavior of non-symmetric fokker-planck type equations.
\newblock {\em Comm. Stoch. Anal.}, 2(1):153--175, 2008.

\bibitem{Arnold2}
A.~Arnold, J.~A. Carrillo, and C.~Manzini.
\newblock Refined long-time asymptotics for some polymeric fluid flow models.
\newblock {\em Commun. Math. Sci.}, 8(3):763--782, 2010.

\bibitem{Cattiaux2008}
D.~Bakry, P.~Cattiaux, and A.~Guillin.
\newblock Rate of convergence for ergodic continuous {M}arkov processes:
  {L}yapunov versus {P}oincar\'e.
\newblock {\em J. Funct. Anal.}, 254(3):727--759, 2008.

\bibitem{Malrieu2011}
J.-B. {Bardet}, A.~{Christen}, A.~{Guillin}, F.~{Malrieu}, and P.-A. {Zitt}.
\newblock Total variation estimates for the tcp process.
\newblock {\em Elec. Journ. Probab.}, 18(10):1--21, 2013.

\bibitem{BLBMZ2}
M.~{Bena{\"i}m}, S.~{Le Borgne}, F.~{Malrieu}, and P.-A. {Zitt}.
\newblock Quantitative ergodicity for some switched dynamical systems.
\newblock {\em Elec. Com. Probab.}, 17(56):1--14, 2012.

\bibitem{Bolley2010}
F.~Bolley, A.~Guillin, and F.~Malrieu.
\newblock Trend to equilibrium and particle approximation for a weakly
  selfconsistent {V}lasov-{F}okker-{P}lanck equation.
\newblock {\em M2AN Math. Model. Numer. Anal.}, 44(5):867--884, 2010.

\bibitem{Calogero2010}
S.~Calogero.
\newblock Exponential convergence to equilibrium for kinetic {F}okker-{P}lanck
  equations.
\newblock {\em Comm. Partial Differential Equations}, 37(8):1357--1390, 2012.

\bibitem{ChafaiMalrieuParoux}
D.~{Chafai}, F.~{Malrieu}, and K.~{Paroux}.
\newblock {On the long time behavior of the TCP window size process}.
\newblock {\em ArXiv e-prints}, November 2008.

\bibitem{Desvillettes2006}
L.~Desvillettes.
\newblock Hypocoercivity: the example of linear transport.
\newblock In {\em Recent trends in partial differential equations}, volume 409
  of {\em Contemp. Math.}, pages 33--53. Amer. Math. Soc., Providence, RI,
  2006.

\bibitem{DesVilla2003}
L.~Desvillettes and C.~Villani.
\newblock On the trend to global equilibrium in spatially inhomogeneous
  entropy-dissipating systems: the linear {F}okker-{P}lanck equation.
\newblock {\em Comm. Pure Appl. Math.}, 54(1):1--42, 2001.

\bibitem{Desvillettes2003}
L.~Desvillettes and C.~Villani.
\newblock On the trend to global equilibrium for spatially inhomogeneous
  kinetic systems: the {B}oltzmann equation.
\newblock {\em Invent. Math.}, 159(2):245--316, 2005.

\bibitem{Diaconis2000}
P.~Diaconis, S.~Holmes, and R.~M. Neal.
\newblock Analysis of a nonreversible {M}arkov chain sampler.
\newblock {\em Ann. Appl. Probab.}, 10(3):726--752, 2000.

\bibitem{DiaconisMiclo}
P.~{Diaconis} and L.~{Miclo}.
\newblock {On the spectral analysis of second-order Markov chains}.
\newblock {\em {Preprint}}, 2013.

\bibitem{Dieudonne}
J.~Dieudonn{\'e}.
\newblock {\em Calcul infinit\'esimal}.
\newblock Hermann, Paris, 1968.

\bibitem{DMS2009}
J.~Dolbeault, C.~Mouhot, and C.~Schmeiser.
\newblock Hypocoercivity for kinetic equations with linear relaxation terms.
\newblock {\em C. R. Math. Acad. Sci. Paris}, 347(9-10):511--516, 2009.

\bibitem{DMS2011}
J.~Dolbeault, C.~Mouhot, and C.~Schmeiser.
\newblock Hypocoercivity and stability for a class of kinetic models with mass
  conservation and a confining potential.
\newblock To appear in Transactions of the American Mathematical Society, 2013.

\bibitem{Guillin2006}
R.~Douc, G.~Fort, and A.~Guillin.
\newblock Subgeometric rates of convergence of {$f$}-ergodic strong {M}arkov
  processes.
\newblock {\em Stochastic Process. Appl.}, 119(3):897--923, 2009.

\bibitem{Hairer2000}
J.-P. Eckmann and M.~Hairer.
\newblock Non-equilibrium statistical mechanics of strongly anharmonic chains
  of oscillators.
\newblock {\em Comm. Math. Phys.}, 212(1):105--164, 2000.

\bibitem{Hairer2003}
J.-P. Eckmann and M.~Hairer.
\newblock Spectral properties of hypoelliptic operators.
\newblock {\em Comm. Math. Phys.}, 235(2):233--253, 2003.

\bibitem{Fellner2003}
K.~Fellner, L.~Neumann, and C.~Schmeiser.
\newblock Convergence to global equilibrium for spatially inhomogeneous kinetic
  models of non-micro-reversible processes.
\newblock {\em Monatsh. Math.}, 141(4):289--299, 2004.

\bibitem{Filbet}
F.~Filbet, C.~Mouhot, and L.~Pareschi.
\newblock Solving the {B}oltzmann equation in {$N\log_2N$}.
\newblock {\em SIAM J. Sci. Comput.}, 28(3):1029--1053 (electronic), 2006.

\bibitem{Fontbona2010}
J.~{Fontbona}, H.~{Gu{\'e}rin}, and F.~{Malrieu}.
\newblock Quantitative estimates for the long time behavior of an ergodic
  variant of the telegraph process.
\newblock {\em Adv. App. Probab.}, 44(4):977--994, 2012.

\bibitem{Gadat2013}
S.~Gadat and L.~Miclo.
\newblock Spectral decompositions and {$\Bbb L^2$}-operator norms of toy
  hypocoercive semi-groups.
\newblock {\em Kinet. Relat. Models}, 6(2):317--372, 2013.

\bibitem{GadatPanloup2013}
S.~Gadat and F.~Panloup.
\newblock Long time behaviour and stationary regime of memory gradient
  diffusions.
\newblock {\em Annales de l'Institut Henri Poincaré, to appear}, 2013.

\bibitem{Grothaus2}
M.~{Grothaus}, A.~{Klar}, J.~{Maringer}, and P.~{Stilgenbauer}.
\newblock {Geometry, mixing properties and hypocoercivity of a degenerate
  diffusion arising in technical textile industry}.
\newblock {\em ArXiv e-prints}, March 2012.

\bibitem{Grothaus1}
M.~{Grothaus} and P.~{Stilgenbauer}.
\newblock {Hypocoercivity for Kolmogorov backward evolution equations and
  applications}.
\newblock {\em ArXiv e-prints}, July 2012.

\bibitem{Guillin2011}
A.~Guillin and F.-Y. Wang.
\newblock Degenerate {F}okker-{P}lanck equations: {B}ismut formula, gradient
  estimate and {H}arnack inequality.
\newblock {\em J. Differential Equations}, 253(1):20--40, 2012.

\bibitem{Helffer2005}
B.~Helffer and F.~Nier.
\newblock {\em Hypoelliptic estimates and spectral theory for {F}okker-{P}lanck
  operators and {W}itten {L}aplacians}, volume 1862 of {\em Lecture Notes in
  Mathematics}.
\newblock Springer-Verlag, Berlin, 2005.

\bibitem{Herau2005}
F.~H{\'e}rau.
\newblock Hypocoercivity and exponential time decay for the linear
  inhomogeneous relaxation {B}oltzmann equation.
\newblock {\em Asymptot. Anal.}, 46(3-4):349--359, 2006.

\bibitem{Nier2004}
F.~H{\'e}rau and F.~Nier.
\newblock Isotropic hypoellipticity and trend to equilibrium for the
  {F}okker-{P}lanck equation with a high-degree potential.
\newblock {\em Arch. Ration. Mech. Anal.}, 171(2):151--218, 2004.

\bibitem{Hormander1967}
L.~H{\"o}rmander.
\newblock Hypoelliptic second order differential equations.
\newblock {\em Acta Math.}, 119:147--171, 1967.

\bibitem{Zegarlinski}
J.~Inglis, M.~Neklyudov, and B.~Zegarli{\'n}ski.
\newblock Ergodicity for infinite particle systems with locally conserved
  quantities.
\newblock {\em Infin. Dimens. Anal. Quantum Probab. Relat. Top.},
  15(1):1250005, 28, 2012.

\bibitem{Ledoux1995}
M.~Ledoux.
\newblock L'alg\`ebre de {L}ie des gradients it\'er\'es d'un g\'en\'erateur
  markovien---d\'eveloppements de moyennes et entropies.
\newblock {\em Ann. Sci. \'Ecole Norm. Sup. (4)}, 28(4):435--460, 1995.

\bibitem{Markowich99}
P.~A. Markowich and C.~Villani.
\newblock On the trend to equilibrium for the {F}okker-{P}lanck equation: an
  interplay between physics and functional analysis.
\newblock {\em Mat. Contemp.}, 19:1--29, 2000.
\newblock VI Workshop on Partial Differential Equations, Part II (Rio de
  Janeiro, 1999).

\bibitem{Volte-Face}
L.~{Miclo} and P.~{Monmarch{\'e}}.
\newblock {Étude spectrale minutieuse de processus moins indécis que les
  autres}.
\newblock {\em ArXiv e-prints, to appear to S\'eminaire de Probabilit\'es},
  September 2012.

\bibitem{MouhotNeumann}
C.~Mouhot and L.~Neumann.
\newblock Quantitative perturbative study of convergence to equilibrium for
  collisional kinetic models in the torus.
\newblock {\em Nonlinearity}, 19(4):969--998, 2006.

\bibitem{Neal2004}
R.~M. {Neal}.
\newblock {Improving Asymptotic Variance of MCMC Estimators: Non-reversible
  Chains are Better}.
\newblock {\em ArXiv Mathematics e-prints}, July 2004.

\bibitem{Lelievre2006}
A.~{Scemama}, T.~{Lelièvre}, G.~{Stoltz}, and M.~{Caffarel}.
\newblock An efficient sampling algorithm for variational monte carlo.
\newblock {\em Journal of Chemical Physics}, 125, September 2006.

\bibitem{Tran2012}
M.-B. {Tran}.
\newblock {Convergence to Equilibrium of Some Kinetic Models}.
\newblock {\em ArXiv e-prints}, August 2012.

\bibitem{Villani2}
C.~Villani.
\newblock Hypocoercive diffusion operators.
\newblock In {\em International {C}ongress of {M}athematicians. {V}ol. {III}},
  pages 473--498. Eur. Math. Soc., Z\"urich, 2006.

\bibitem{Villani2009}
C.~Villani.
\newblock Hypocoercivity.
\newblock {\em Mem. Amer. Math. Soc.}, 202(950):iv+141, 2009.

\end{thebibliography}
\end{document}